\newtheorem{theo}{Theorem}
\newtheorem{lemma}[theo]{Lemma}
\newtheorem{coro}[theo]{Corollary}
\newtheorem{prop}[theo]{Proposition}
\newtheorem{remark}[theo]{Remark}
\def\Z{\mathbb Z}
\def\Q{\mathbb Q}
\def\N{\mathbb N}
\def\R{\mathbb R}
\def\C{\mathbb C}
\def\Val{\overline{\mathrm{Val}}}
\begin{document}
\author{K\'aroly J. B\"or\"oczky, M\'aty\'as Domokos, Ansgar Freyer,
\\Christoph Haberl, Jin Li}

\title{Exponential valuations on lattice polygons valued at formal power series}


\maketitle

\begin{abstract}
We classify valuations on lattice polygons with values in the ring of formal power series that commute with the action of the affine unimodular group. A typical example of such valuations is induced by the Laplace transform, but as it turns out there are many more. The classification is done in terms of formal power series that satisfy certain functional equations.  
We align our classification with the decomposition into so-called dilative components.
\end{abstract}
\bigskip

{\noindent
2000 AMS subject classification: 52B20, 52B45}

\section{Introduction}

By a {\it polytope} $P$ in $\R^n$, we mean the convex hull of finitely many points of $\R^n$. $P$ is called a {\it segment}
if ${\rm dim}\,P=1$, and $P$ is called a {\it polygon}
if ${\rm dim}\,P=2$.
Let $\mathcal{F}$ be a family of compact convex sets in $\R^n$ such that if $P\cup Q$ is convex for $P,Q\in\mathcal{F}$,
then $P\cup Q\in\mathcal{F}$ and $P\cap Q\in\mathcal{F}$. Examples of such families are the
   family $\mathcal{K}^n$ of all convex compact sets in $\R^n$, the family
$\mathcal{P}^n$ of all polytopes in $\R^n$
and the family
$\mathcal{P}(\Z^n)$ of all lattice polytopes; namely, convex hulls of finitely many points of $\Z^n$
(see McMullen \cite{McM09}).
If $\mathcal{A}$ is a cancellative monoid (cancellative commutative semigroup with identity element $0_\mathcal{A}$), then a function $Z:\mathcal{F}\to \mathcal{A}$
is called a {\it valuation} if the following holds: if $P\cup Q$ is convex for $P,Q\in\mathcal{F}$, then
\begin{equation}
\label{valuationdef}
Z(P\cup Q)+Z(P\cap Q)=Z(P)+Z(Q).
\end{equation}
We say that the valuation $Z:\mathcal{F}\to \mathcal{A}$ is {\it simple} if $Z(P)=0_\mathcal{A}$ for any $P\in\mathcal{F}$ with ${\rm dim}\,P\leq n-1$. Typically, one would consider valuations intertwining with some natural group actions, as we will shortly see.

While the idea of valuations on convex polytopes played a crucial role in Dehn's solution of Hilbert's Third Problem already around 1900, after sporadic results, the systematic study of valuations only started with Hadwiger's celebrated characterization of the intrinsic volumes as the basis of the space of continuous isometry invariant valutions from 1957. For the  breathtaking developments of the last seven decades, see for example the monograph Alesker \cite{Ale18}, and the survey papers
Alesker \cite{Ale17}, Ludwig \cite{Lud23} and Ludwig, Mussnig \cite{LuM23}. 
The theory of valuations on lattice polytopes has been flourishing since the classical paper by Betke, Kneser \cite{BeK85} in 1985 characterizing unimodular invariant valuations on lattice polytopes, see for example, B\"or\"oczky, Ludwig \cite{BoL17,BoL19}, 
Jochemko, Sanyal \cite{JoS17,JoS18}, Berg, Jochemko, Silverstein \cite{BJM18} and Ludwig, Silverstein \cite{LuS17}.

To state the result stimulating our research, let $L^1_c(\R^n)$ denote the family
of Lebesgue integrable functions with compact support on $\R^n$. In addition, for measurable $\Omega\subset\R^n$, let $\mathcal{M}(\Omega)$ denote the family of Lebesgue measurable functions on $\Omega$. For $f\in L^1_c(\R^n)$, its Laplace transform is
$$
\mathcal{L}f(u)=\int_{\R^n}e^{-\langle u,v\rangle}f(v)\,dv.
$$
Here, $\langle u,v\rangle$ denotes the standard inner product on $\R^n$.
Li, Ma \cite{LiM17} applied the definition of the Laplace transform to  a  convex body $K$ by applying $\mathcal{L}$
to the characteristic function; namely,
$$
\mathcal{L}K=\mathcal{L}\mathbf{1}_K=\int_Ke^{-\langle u,v\rangle}\,dv.
$$

Inspired by the properties of the Laplace transform
applied to the characteristic functions of compact convex sets,
Li, Ma \cite{LiM17} considered valuations $Z\colon\mathcal{K}^n\to C(\R^n)$, where $C(\R^n)$ denotes the space of continuous functions on $\R^n$, which commute with affine transformations in the following sense: For any $K\in\mathcal{K}^n$ and any $\Phi\in{\rm GL}(n,\R)$, $w\in\R^n$ we have
\begin{equation}
    \label{eq:equivariance_negative_sign}
    Z(\Phi K + w)(u) = e^{-\langle u,w\rangle} |\det\Phi| \cdot Z(K)(\Phi^Tu)
\end{equation}

\begin{theo}[Li, Ma \cite{LiM17}]
Let $Z\colon \mathcal{K}^n \to C(\R^n)$ be a continuous (with respect to the Hausdorff metric) valuation satisfying \eqref{eq:equivariance_negative_sign}. Then there exists a constant $c\in\R$ such that $Z = c\mathcal L$.
\end{theo}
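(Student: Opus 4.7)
The plan is to identify the constant $c$ from a single numerical invariant of $Z$, show that the remainder $W := Z - c\mathcal L$ again satisfies the hypotheses of the theorem but now vanishes at $u=0$, and then promote this pointwise vanishing to vanishing for every $u\in\R^n$.

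The first step is to prove that any $Z$ satisfying \eqref{eq:equivariance_negative_sign} is \emph{simple}. If $\dim K < n$ and $w_0 + V$ is its affine hull, choose a linear $\Phi$ fixing $V$ pointwise and contracting $V^\perp$, and set $w := w_0 - \Phi w_0$; then $\Phi K + w = K$, while $|\det\Phi|$ can be made arbitrarily small. Substituting into \eqref{eq:equivariance_negative_sign} gives $Z(K)(u) = e^{-\langle u,w\rangle}|\det\Phi|\,Z(K)(\Phi^T u)$; sending $|\det\Phi|\to 0$ (with $w$ and $\Phi^T u$ remaining bounded) forces $Z(K)(u) = 0$ by continuity of $Z(K)$ as an element of $C(\R^n)$. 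Next, evaluating \eqref{eq:equivariance_negative_sign} at $u = 0$ turns $K\mapsto Z(K)(0)$ into a continuous translation-invariant, $|\det\Phi|$-covariant valuation on $\mathcal K^n$, which a Hadwiger-type classification identifies with $c\,V_n(K)$ for a unique scalar $c\in\R$. With this $c$, the functional $W := Z - c\mathcal L$ is again a continuous simple valuation on $\mathcal K^n$ satisfying \eqref{eq:equivariance_negative_sign} and now also $W(K)(0) = 0$ for every $K$.

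To conclude it suffices to prove $W(\Delta) \equiv 0$ for the standard simplex $\Delta := \mathrm{conv}(0,e_1,\ldots,e_n)$: equivariance extends the identity to every $n$-simplex, simplicial dissection together with simplicity to every polytope, and continuity to all of $\mathcal K^n$. To pin down $f(u) := W(\Delta)(u)$, I would dissect the elongated simplex $\mathrm{conv}(0,(s+t)e_1,e_2,\ldots,e_n)$ along the hyperplane $\{x_1 = s\}$ into the two subsimplices $\mathrm{conv}(0,s e_1,e_2,\ldots,e_n)$ and $\mathrm{conv}(s e_1,(s+t)e_1,e_2,\ldots,e_n)$. Each piece is an explicit affine image of $\Delta$, so applying \eqref{eq:equivariance_negative_sign} to each and using simplicity of $W$ to discard the common $(n-1)$-dimensional facet produces the functional equation
\[
(s+t)\,f\bigl((s+t)u_1,u_2,\ldots,u_n\bigr) = s\,f(s u_1,u_2,\ldots,u_n) + t\, e^{-s u_1}\,f(t u_1,u_2 - s u_1,\ldots,u_n - s u_1).
\]
For $n = 1$ this collapses to $g(x+y) = g(x) + e^{-x}g(y)$ with $g(x) := x f(x)$; symmetrizing in $(x,y)$ forces $g(x)/(1-e^{-x})$ to be a constant, which must equal $f(0) = 0$, yielding $f\equiv 0$. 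For $n\ge 2$ one runs the analogous family of dissections in each coordinate direction and combines them with the symmetry $f(u_{\sigma(1)},\ldots,u_{\sigma(n)}) = f(u)$ coming from permutations of $e_1,\ldots,e_n$ (unimodular transformations preserving $\Delta$) to accumulate enough identities to force $f\equiv 0$.

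I expect the chief obstacle to be precisely this last functional-equation analysis. Unlike in dimension one, the second affine map in the dissection is a shear whose transpose mixes $u_1$ into the other coordinates, so the displayed equation couples the arguments across all $u_i$ and does not decouple into independent one-variable equations. Handling this requires combining the constraints from $n$ independent coordinate-aligned dissections with the permutation symmetry and the initial condition $f(0)=0$, and using continuity to pass from identities derived for positive slicing parameters $s,t$ to the global vanishing $f\equiv 0$ on all of $\R^n$.
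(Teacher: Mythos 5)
A preliminary remark: the paper does not prove this statement — it is imported verbatim from Li--Ma \cite{LiM17} — so there is no in-paper proof to compare against, and I am judging your proposal on its own terms. Your skeleton is sound. The simplicity argument (contract the affine hull, recentre, and let $|\det\Phi|\to 0$ while $w$ and $\Phi^T u$ stay bounded) is correct; so is the identification $Z(K)(0)=cV_n(K)$ via Hadwiger's characterization of $n$-homogeneous, translation-invariant, continuous valuations; so is the reduction to $W(\Delta)\equiv 0$ by affine equivariance, dissection, and density of polytopes. One small slip: the hyperplane $\{x_1=s\}$ does \emph{not} cut $\mathrm{conv}(0,(s+t)e_1,e_2,\dots,e_n)$ into the two subsimplices you name — the correct cut is the hyperplane through $se_1,e_2,\dots,e_n$ — but the dissection into those two simplices is valid, and your displayed functional equation is right (I checked the affine maps and their transposes). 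The $n=1$ case is then genuinely complete, and a minor point worth a sentence is that promoting $W(\Delta)=0$ to $W(P)=0$ over a triangulation uses inclusion–exclusion over non-convex unions, which for valuations on $\mathcal{K}^n$ requires Groemer's extension theorem or an inductive shelling argument.

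The genuine gap is exactly where you yourself locate it: for $n\ge 2$ you never solve the system of functional equations, you only assert that "accumulating enough identities" from coordinate-aligned dissections and simplex symmetries will force $f\equiv 0$. This is not a routine verification; it is where essentially all of the content of the Li--Ma theorem lives. For comparison, the $n=2$ specializations of your dissection identity and simplex symmetries are precisely conditions (A), (B), (C) of Proposition~\ref{simplef2algebraic-PowerSeries}, and the present paper spends all of Section~\ref{sec-rho-polynomial} analyzing that system (via the transform $f\mapsto f^\sharp$ that strips the exponential factors and a reduction to $D_4$-invariants), ending up with an \emph{infinite-dimensional} solution space in the discrete setting. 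In the continuous setting the additional shears with real parameters do collapse the solution space to the single line $c\,\mathcal{L}\Delta$, but exhibiting that collapse is the theorem: without carrying out this step your argument is a plausible plan rather than a proof. To close the gap you would need either an explicit solution of the coupled system (say, by first determining $f$ on the coordinate axes, then propagating along the shear orbits, using continuity to pass from rational to real parameters), or a substitute structural input such as an induction on dimension through the restriction of $W$ to bodies in hyperplanes.
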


In this paper, we consider valuations  on lattice polygons (of $\Z^2$) valued at formal power series in two variables.
We consider the action of the affine unimodular group (the group of affine transformation leaving $\Z^2$ invariant) 
\[
\mathcal{G}(\Z^2) = \Z^2 \rtimes {\rm GL}(2,\Z)
\]
on $\Q[[x,y]]$ (the ring of formal power series in the variables $x$ and $y$) where for $f\in \Q[[x,y]]$ and $\Xi\in\mathcal G(\Z^2)$ with
\[
\Xi(x,y) = (ax+by + \alpha,~cx+dy+\beta),
\]
$a,b,c,d,\alpha,\beta\in\Z$ and $ad-bc=\pm 1$, we have
\begin{equation}
\label{AffineActionOnPowerSeries}
(\Xi\cdot f)(x,y)=\exp (\alpha x+\beta y)\cdot f(ax+cy,bx+dy).
\end{equation}
In this context the exponential function is considered as a formal power series
$\exp(t)=\sum_{n=0}^\infty\frac{t^n}{n!}\in\Q[[t]]$, which still satisfies  that
$\exp(t+s)=\exp(t)\cdot \exp(s)$ for any formal power series $s,t$ (cf. Sambale \cite{Sam23}). 

On lattice polygons, $\mathcal{G}(\Z^2)$ acts in the natural way. Now a valuation  $Z:\mathcal{P}(\Z^2)\to \Q[[x,y]]$ is called $\mathcal{G}(\Z^2)$ equivariant, if $Z(\Xi P) = \Xi\cdot Z(P)$ holds for all $P\in\mathcal P(\Z^2)$ and $\Xi\in\mathcal G(\Z^2)$. This equivariance property is essentially analogue to the one in \eqref{eq:equivariance_negative_sign} except for a sign in the exponential function.

One example of a $\mathcal G(\Z^2)$ equivariant valuation is the ``positive Laplace transform" $\mathcal{L}_+$
defined for $P\in\mathcal P(\Z^2)$ as
$$
\mathcal{L}_+(P)(x,y)=\int_{\R^2}\exp(\alpha x+\beta y)\cdot \mathbf{1}_P(\alpha,\beta)\,d\alpha d\beta.
$$
This valuation has been studied (in general dimension) by Barvinok \cite{Ba91,Ba08} and Lawrence \cite{L91a,L91b}, which has lead to spectacular progress in computational geometry.

The main goal of this paper is to characterize $\mathcal G(\Z^2)$ equivariant 
 valuations $Z:\mathcal{P}(\Z^2)\to \Q[[x,y]]$  in a constructive way, providing a tool to produce such valuations on the one hand, and to be able to decide whether a valuation  on $\mathcal{P}(\Z^2)\to\Q[[x,y]]$ is $\mathcal G(\Z^2)$ equivariant on the other hand. Our method is partially based on ideas in 
B\"or\"oczky, Domokos, Freyer, Haberl, Harcos,  Li \cite{BDFHHL}.
We note that Freyer, Ludwig, Rubey \cite{FMR} characterized $\mathcal{G}(\mathbb{Z}^2)$ equivariant valuations valued at the formal power series in two variables, but their approach is less constructive (see below for a summary of their result). 

We write $e_1,e_2$ to denote the orthonormal basis of $\R^2$ also generating $\Z^2$,
and set $T=[e_1,e_2,o]$ where $o=(0,0)$ stands for the origin, and $[x_1,\ldots,x_k]$ stands for the convex hull of
$x_1,\ldots,x_k\in \R^2$. In the formulas below,
$$
\frac{e^t-1}{t}\mbox{ is identified with the formal power series }\sum_{n=0}^\infty\frac{t^n}{(n+1)!}. 
$$
In addition, we write $D_4$ to denote the subgroup of $\mathrm{GL}(2,\Z)$ generated by the matrices 
$\left[\begin{array}{cc}1 & 0 \\1 & -1\end{array}\right]$ and 
$\left[\begin{array}{cc}1 & -2 \\0 & -1\end{array}\right]$. 
The reason for the chosen notation is that this group  $D_4$ is isomorphic to the dihedral group of $8$ elements. 
The subalgebra of $\Q[[x,y]]$ of $D_4$ invariant elements is
(see Remark~\ref{D4-natural-basis})
\begin{align}
\nonumber
\Q[[x,y]]^{D_4}=&\{h\in \Q[[x,y]]:\,\Phi\cdot h=h\mbox{ for }\Phi\in D_4\}\\
\label{D4-natural-basis-intro}
=& \{g(2x^2+2xy+y^2, 4x^2y^2+4xy^3+y^4)\in \Q[[x,y]]:\\
\nonumber
&\mbox{ }g(a,b)\in \Q[[a,b]]\}
\end{align}
where the polynomials
$2x^2+2xy+y^2$ and $4x^2y^2+4xy^3+y^4$ are algebraically independent.

\begin{theo}
\label{PowerSeriesValGL2Z}
For any $\mathcal G(\Z^2)$ equivariant
valuation $Z:\mathcal{P}(\Z^2)\to \Q[[x,y]]$, 
$Z(\{o\})$
 is a constant   power series $c \in \Q$, and
\begin{equation}
\label{f1PowerSeries}
f_1(x,y):=Z([o,e_1])(x,y)=
g(x^2)\cdot \exp\left(\mbox{$\frac{1}2$}\,x\right)
\end{equation}
holds for some $g\in \Q[[x]]$; moreover, 
$Z$ satisfies 
\begin{equation}
\label{ZZ2Z1}
Z(T)(x,y)=f_2(x,y)+\mbox{$\frac12$}\,f_1(x,y)+
\mbox{$\frac12$}\,f_1(y,-x)
+\mbox{$\frac{e^{x}}2$}\,f_1(-x+y,-x)
\end{equation}
where $f_2$ is defined by 
\begin{equation}
\label{simpleZrho}
f_2(x,y)=\frac{e^x}{y}\cdot \frac{e^{y-x}-1}{y-x}\cdot \varrho(y-x,x)-
\frac{1}y\cdot \frac{e^{x}-1}{x}\cdot \varrho(x,y-x)
\end{equation}
 for a $\varrho\in\Q[[x,y]]^{D_4}$
satisfying
\begin{equation}
\label{rhoformula}
(2x+y) \varrho(x,y)=(x+y)\varrho(x,x+y)+x \varrho(x+y,x).
\end{equation}
In particular, $Z=Z_1+Z_2$ where $Z_1$ (constructed in Proposition \ref{lowdim}) and $Z_2$ are $\mathcal G(\Z^2)$ equivariant
valuations, and $Z_2$ is simple with $Z_2(T)=f_2$.

On the other hand, if $c\in\Q$, 
$f_1(x,y)=g(x^2)\cdot \exp\left(\mbox{$\frac{1}2$}\,x\right)$
for some $g\in \Q[[x]]$,
and $f_2$ is defined by \eqref{simpleZrho} 
for a
$\varrho\in\Q[[x,y]]^{D_4}$ 
satisfying \eqref{rhoformula},
then there exists a $\mathcal G(\Z^2)$ equivariant
valuation $Z:\mathcal{P}(\Z^2)\to \Q[[x,y]]$  such that $Z(\{o\})\equiv c$, 
$Z([o,e_1])=f_1$, and
$Z(T)$ is defined by \eqref{ZZ2Z1}.
\end{theo}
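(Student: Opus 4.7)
The proof naturally splits into a necessity direction (every $\mathcal G(\Z^2)$-equivariant $Z$ has the claimed structure) and a converse direction (the data $(c, f_1, \varrho)$ suffices to build such a $Z$). For the necessity, I first use equivariance against the stabilizers of the simplest lattice polygons. The stabilizer of $\{o\}$ is $\mathrm{GL}(2,\Z)$, so $Z(\{o\})$ lies in $\Q[[x,y]]^{\mathrm{GL}(2,\Z)}$; working coefficient by coefficient, invariance under the shear $(x,y)\mapsto(x,x+y)$ forces $Z(\{o\})$ to depend only on $x$, and a further $90^{\circ}$ rotation forces it to be a constant $c$. For $f_1=Z([o,e_1])$, the stabilizer of $[o,e_1]$ contains the shears $(x,y)\mapsto(x+ny,y)$, which act on power series as $f_1(x,y)\mapsto f_1(x,nx+y)$, so the coefficient argument again shows $f_1$ is a series in $x$ alone. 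The swap reflection $(x,y)\mapsto(1-x,y)$ then yields the functional equation $f_1(x)=e^x f_1(-x)$, equivalent to $e^{-x/2}f_1(x)$ being even; hence $f_1(x)=g(x^2)\exp(\tfrac12 x)$.

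To analyze $Z(T)$, I would invoke Proposition \ref{lowdim} to obtain the $\mathcal G(\Z^2)$-equivariant valuation $Z_1$ prescribed by $(c,f_1)$ and set $Z_2:=Z-Z_1$. By construction $Z_2$ vanishes on $\{o\}$ and on $[o,e_1]$; equivariance propagates this to every singleton and every primitive segment, and an induction on lattice length via $[o,ke_1]=[o,(k-1)e_1]\cup[(k-1)e_1,ke_1]$ extends it to every lattice segment. Hence $Z_2$ is simple and the only remaining unknown is $f_2:=Z_2(T)$. The stabilizer of $T$ in $\mathcal G(\Z^2)$ is isomorphic to $S_3$ and yields symmetry identities on $f_2$, while further identities come from pairs of unimodular triangulations of a lattice polygon, for instance the triangle $[o,2e_1,e_2]$ subdivided at $e_1$, or the parallelogram $[o,e_1,e_1+e_2,e_2]$ cut along either diagonal. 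Combining these, I would rewrite $f_2$ in the Laplace-type form \eqref{simpleZrho} with an auxiliary series $\varrho$ subject to \eqref{rhoformula}, and deduce the $D_4$-invariance from the residual freedom in the parameterization: any $\varrho$ producing $f_2$ via \eqref{simpleZrho} can be symmetrized over $D_4$ without changing $f_2$, giving a bijective correspondence.

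For the converse direction, given admissible $(c,f_1,\varrho)$, I define $Z$ on $\{o\}$, $[o,e_1]$, and $T$ by the stated formulas and extend by equivariance to every translate and $\mathrm{GL}(2,\Z)$-image. Every lattice polygon admits a unimodular triangulation, so $Z$ extends to an arbitrary lattice polygon by inclusion-exclusion over its pieces. The nontrivial consistency check — independence of the chosen triangulation — reduces to the local ``bistellar flip'' between adjacent unimodular triangulations of a lattice parallelogram, and this flip relation is precisely what \eqref{rhoformula} encodes; the valuation axiom for general $P,Q$ then follows by taking triangulations compatible with $P\cup Q$ and $P\cap Q$.

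The main obstacle is the structural analysis of $f_2$. One must identify a minimal set of dissection relations whose combined content is exactly \eqref{rhoformula} together with $D_4$-invariance, and then read off the explicit form \eqref{simpleZrho}; a delicate but crucial verification is that the apparent denominators $1/x$, $1/y$, $1/(y-x)$ in \eqref{simpleZrho} cancel to give a genuine element of $\Q[[x,y]]$, a cancellation ultimately forced by \eqref{rhoformula} and serving as an internal consistency check on the whole parameterization.
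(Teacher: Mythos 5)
Your treatment of $Z(\{o\})$, of $f_1$ (the observation that $\exp(-\tfrac x2)f_1$ must be even is a clean shortcut past the paper's logarithm argument), and the reduction to a simple valuation $Z_2$ via Proposition~\ref{lowdim} all match the paper and are fine. The genuine gap is the step from the dissection identities on $f_2$ to the parametrization \eqref{simpleZrho}--\eqref{rhoformula}: this is the mathematical core of the theorem, and your proposal replaces it with ``I would rewrite $f_2$ in the Laplace-type form.'' In the paper this is Theorem~\ref{ABC-AprimeE-PowerSeries} together with Lemmas~\ref{f-AAprime-PowerSeries}--\ref{ABC-ABCprime-PowerSeries}: one introduces the explicit transform $\varrho=f^\sharp$ of \eqref{rhotildef-PowerSeries} and its inverse $\varrho^\dag$, and proves by direct (and lengthy) power-series manipulation that conditions (A), (B), (C) on $f_2$ are equivalent to (A$'$) and (E) on $\varrho$. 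Nothing in your outline produces the transform or these equivalences, and the same omission affects your converse direction, where you would still have to verify that the $f_2$ of \eqref{simpleZrho} satisfies the dissection identities (A), (B), (C) — again the content of Theorem~\ref{ABC-AprimeE-PowerSeries} — before your triangulation/flip construction (essentially re-proving Proposition~\ref{simplef2algebraic-PowerSeries}) can be launched.

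Moreover, your proposed mechanism for the $D_4$-invariance of $\varrho$ is wrong in substance. You claim there is ``residual freedom in the parameterization'' so that $\varrho$ can be symmetrized over $D_4$ without changing $f_2$. In fact $\varrho$ is \emph{uniquely} determined by $f_2$: Lemma~\ref{f-AAprime-PowerSeries}(ii) shows that $\varrho^\dag=f$ forces $\varrho=f^\sharp$, so there is nothing to average over. The $D_4$-invariance is instead a \emph{consequence} of the functional equations: (A$'$) implies $\varrho(x,y)=\varrho(x+y,-y)$ (Lemma~\ref{lemma:rho(x,y-x)}), which is invariance under one generator of $D_4$, and (E) is exactly invariance under the other. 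Relatedly, the cancellation of the denominators in \eqref{simpleZrho} is not ``forced by \eqref{rhoformula}'' as you assert; it follows from the evenness condition (D), $\varrho(-x,-y)=\varrho(x,y)$ (Lemma~\ref{even-degree-terms}), which in turn is derived from (A$'$) and (E) together. As it stands, the proposal is a plausible roadmap for the easy half of the theorem but does not contain the argument that makes the parametrization by $\varrho\in\Q[[x,y]]^{D_4}$ with \eqref{rhoformula} correct.
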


In particular, simple $\mathcal G(\Z^2)$ equivariant
valuations $Z:\mathcal{P}(\Z^2)\to \Q[[x,y]]$ are in bijective correspondance with formal power series $\varrho\in\Q[[x,y]]^{D_4}$ 
satisfying \eqref{rhoformula} via the formula (cf. \eqref{simpleZrho})
\begin{equation}
\label{simpleZrho-PowerSeries0}
Z(T)(x,y)=\frac{e^x}{y}\cdot \frac{e^{y-x}-1}{y-x}\cdot \varrho(y-x,x)-
\frac{1}y\cdot \frac{e^{x}-1}{x}\cdot \varrho(x,y-x)
\end{equation}
where the right hand side is a formal power series under these conditions. Here $\varrho$ is even (it contains only terms of even degree) by \eqref{D4-natural-basis-intro}, and actually, $\varrho$ does not contain terms of degree $2$ (cf.\ Corollary \ref{coro:vd_dims}).

In order to 
characterize a
$\mathcal G(\Z^2)$ equivariant
valuation $Z:\mathcal{P}(\Z^2)\to \Q[[x,y]]$, Freyer, Ludwig, Rubey \cite{FMR} say that $Z$ is $\delta$-dilative  for $\delta\in\Z$ if for any (two-dimensional) lattice polygon $P$, we have
\begin{equation}
\label{d-dilative}
Z(mP)(x,y)=m^{-\delta}\cdot Z(P)(mx,my)
\end{equation}
for any integer $m\geq 1$. As an example, the positive Laplace transform $\mathcal L_+$ is (-2)-dilative.
It turned out that the dilative valuations give a natural decomposition to the space of $\mathcal{G}(\Z^2)$ valuations $\mathcal P(\Z^2) \to \Q[[x,y]]$. As in \cite{FMR}, we write $\Val$ for the vector space of $\mathcal{G}(\Z^2)$ equivariant valuations $\mathcal P(\Z^2) \to \Q[[x,y]]$, and moreover, $\Val_\delta$ for the subspace of $\delta$-dilative valuations.

\begin{theo}[Freyer, Ludwig, Rubey \cite{FMR}]
\label{theorem:dilative_decomp}
    We have \[\Val = \prod_{\delta\geq -2} \Val_\delta,\] as well as, \[\{Z\in\Val \colon Z\textrm{ simple}\} = \prod_{\delta \geq -2 \text{ even},~\delta\neq 0} \Val_\delta.\]
    Moreover,
        \[
            \dim\Val_\delta = \begin{cases}
                0, &\delta < -2,\\
                1, &\delta > -2\text{ odd},\\
                \lfloor \tfrac{\delta+2}{12}\rfloor + 1, &\delta\geq -2\text{ even and }\,\delta\,{\rm mod}\,12 \neq 0,\\
                \lfloor \tfrac{\delta+2}{12}\rfloor,&\delta\geq -2\text{ even and }\,\delta\,{\rm mod}\,12 = 0.
            \end{cases}
        \]
\end{theo}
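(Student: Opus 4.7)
My plan is to derive Theorem \ref{theorem:dilative_decomp} from the explicit parametrization of $\Val$ provided by Theorem \ref{PowerSeriesValGL2Z}.

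\emph{Stage 1: the splitting $\Val = \prod_{\delta\geq -2}\Val_\delta$.} I would first show that for each $Z \in \Val$ and each lattice polygon $P$, every coefficient $a_{i,j}(P) := [x^i y^j] Z(P)$ extends to a polynomial $m \mapsto a_{i,j}(mP)$ of degree at most $i+j+2$. By Theorem \ref{PowerSeriesValGL2Z} the valuation is determined by its values on $\{o\}$, $[o, e_1]$ and $T$, so it suffices to verify polynomiality on these three pieces. On $\{o\}$ it is trivial; on the segment one uses the telescoping identity $Z([o, me_1]) = \sum_{i=0}^{m-1} e^{ix} f_1 - c \sum_{i=1}^{m-1} e^{ix}$, whose coefficients involve the Faulhaber power sums $\sum_{j=0}^{m-1} j^k$; and on $mT$ one lattice-triangulates into $m^2$ unit triangles of two orientations (congruent to $T$ and to the reflected triangle $(e_1 + e_2) - T$), giving a polynomial-in-$m$ expression of the required degree after applying equivariance. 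Extracting $a_{i,j}(mP) = \sum_k b_{i,j,k}(P) m^k$ and setting $[x^i y^j] Z^{(\delta)}(P) := b_{i,j,i+j-\delta}(P)$ defines the $\delta$-dilative component; it is a valuation by linearity, is $\mathcal{G}(\Z^2)$-equivariant because the rescaled dilation $Z \mapsto \bigl[P \mapsto Z(mP)(x/m, y/m)\bigr]$ is an endomorphism of $\Val$, and $Z = \sum_\delta Z^{(\delta)}$ holds formally since only finitely many $\delta$ contribute to each monomial.

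\emph{Stage 2: computing $\dim \Val_\delta$.} Via Theorem \ref{PowerSeriesValGL2Z} I pass to the parameter space $(c, g, \varrho)$. A direct calculation from \eqref{simpleZrho}, using $\varrho(mx, my) = m^d \varrho(x, y)$ for homogeneous $\varrho$, shows that a $\varrho$ of degree $d$ (necessarily even and $d\neq 2$) produces a purely $(d-2)$-dilative simple valuation, so
\[
\dim\bigl(\{Z\in\Val_\delta : Z \text{ simple}\}\bigr) = \dim\bigl\{\varrho \in \Q[[x,y]]^{D_4}_{\delta+2} : \eqref{rhoformula}\bigr\}.
\]
The ambient space $\Q[[x,y]]^{D_4}_d$, freely spanned by monomials in $u = 2x^2+2xy+y^2$ (weight $2$) and $v = 4x^2y^2+4xy^3+y^4$ (weight $4$) of total weight $d$, has dimension $\lfloor d/4 \rfloor + 1$ for even $d$. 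The non-simple contribution from $(c, g)$ is handled in parallel by solving the $\delta$-dilation constraints on $[o, me_1]$ and $T$ recursively (matching $\sinh(mx/2)/\sinh(x/2) \cdot g(x^2) = m^{-\delta} g(m^2 x^2)$ and the analogous identity on $T$), which yields exactly a $1$-dimensional non-simple family in $\Val_\delta$ for each odd $\delta \geq -1$ and none for even $\delta$.

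\emph{Main obstacle.} The crux lies in computing the kernel of the linear operator $L\colon\varrho \mapsto (2x+y)\varrho(x,y) - (x+y)\varrho(x,x+y) - x\varrho(x+y,x)$ on each graded piece of $\Q[[x,y]]^{D_4}$. Although $\dim \Q[[x,y]]^{D_4}_d = \lfloor d/4 \rfloor + 1$ is periodic of period $4$ in $d$, the kernel of $L$ must exhibit the period-$12$ structure claimed by the dimension formula. This period cannot come from a finite-group Molien calculation, since the substitutions $(x,y) \mapsto (x, x+y)$ and $(x,y) \mapsto (x+y, x)$ have infinite order in $\mathrm{GL}(2,\Z)$. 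I would attack this by writing $L$ explicitly in the basis $\{u^i v^j\}$ and reducing the rank computation to a combinatorial linear recurrence in $d$ whose period-$12$ behavior would then need to be verified directly.
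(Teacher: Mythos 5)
This theorem is not proved in the paper at all: it is imported verbatim from Freyer, Ludwig, Rubey \cite{FMR}, and the only argument the paper adds is the remark that passing from real to rational coefficients is harmless. Your proposal is therefore necessarily a different route, namely an attempt to reprove the result from the parametrization in Theorem \ref{PowerSeriesValGL2Z}. That direction is not circular (the proof of Theorem \ref{PowerSeriesValGL2Z} in Section \ref{sec-rho-polynomial} does not use Theorem \ref{theorem:dilative_decomp}), and your Stage 1 is a reasonable sketch of the McMullen/Khovanskii--Pukhlikov homogeneous decomposition that the paper itself invokes in Section \ref{sec-Dimension-Modular-Forms} via \eqref{trans-poly-decomp} and \eqref{d-dilative-Val}.

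The gap is exactly where you flag it, and it is fatal rather than technical. The whole content of the dimension formula is the period-$12$ count of $\dim\ker L$ on $\Q[[x,y]]^{D_4}_{d}$, and you leave this as a ``combinatorial linear recurrence \dots whose period-$12$ behavior would then need to be verified directly.'' The functional equation \eqref{rhoformula} is a cocycle-type condition for the infinite group ${\rm SL}(2,\Z)$; as the Remark following Theorem \ref{thm:dilative_parameters} indicates, $\dim\mathcal V_d$ coincides with the dimension of the space of modular forms of weight $d$, and in \cite{FMR} this count rests on period-polynomial/Eichler--Shimura type arguments, not on an elementary rank computation in the basis $\{u^iv^j\}$. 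Writing $L$ out in that basis does not produce a finite recurrence with period $12$ without essentially redoing that theory, so the heart of the theorem remains unproved. Note also that the paper's own logic runs in the opposite direction: $\dim\mathcal V_d$ (Corollary \ref{coro:vd_dims}) is \emph{deduced from} Theorem \ref{theorem:dilative_decomp}, so you cannot borrow it. Two further errors in Stage 2: you assert that nonzero homogeneous $\varrho$ must have degree $d\neq 2$, which is again Corollary \ref{coro:vd_dims} and hence unavailable at this point; and your claim that the $(c,g)$-part contributes no non-simple family for even $\delta$ contradicts the $0$-dilative $\cosh$ family of Theorem \ref{thm:dilative_parameters}(3), which is precisely why $\delta=0$ is excluded from the product describing the simple valuations.
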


It should be noted that in \cite{FMR} power series with real coefficients are considered. However, the choice of coefficients does not play a crucial role in these results as the bases of the vector spaces involved may be chosen to be rational. Hence, Theorem \ref{theorem:dilative_decomp} holds true for rational coefficients as well.




Given the above theorem, it is natural to ask for a characterization of $\delta$-dilative valuations in terms of the data $c\in\Q$, $g\in\Q[[x]]$ and $\rho\in\mathcal V$ from Theorem \ref{PowerSeriesValGL2Z}, where $\mathcal V$ denotes the space of formal power series $\varrho\in\Q[[x,y]]^{D_4}$ 
satisfying \eqref{rhoformula}. We write $\mathcal V_d$ for the space of homogeneous polynomials of degree $d$ in $\mathcal V$. 

\begin{theo}
\label{thm:dilative_parameters}
    Let $\delta \geq -2$ be an integer and let $Z\colon\mathcal P(\Z^2) \to \Q[[x,y]]$ be a $\mathcal{G}(\Z^2)$ equivariant valuation parametrized by $c\in\Q$, $g\in\Q[[x]]$ and $\rho\in \mathcal V$ as in Theorem \ref{PowerSeriesValGL2Z}.
    \begin{enumerate}
        \item Let $\delta$ be even and non-zero. Then, $Z$ is $\delta$-dilative if and only if $c=0$, $g=0$ and $\rho\in \mathcal V_{\delta+2}$.
        \item Let $\delta$ be odd. Then $Z$ is $\delta$-dilative if and only if $c=0$, $g(x)= \alpha x^{\delta/2}\sinh(\tfrac{\sqrt x}{2})$ for some $\alpha \in \Q$, and $\rho=0$.
        \item $Z$ is $0$-dilative if and only if $c=\alpha$, $g(x)=\alpha \cosh(\tfrac{\sqrt x}{2})$ for some $\alpha\in \Q$, and $\rho=0$.
    \end{enumerate}
\end{theo}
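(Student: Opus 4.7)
The plan is to translate the $\delta$-dilative condition into constraints on $c$, $g$, $\rho$ separately, using Theorem~\ref{theorem:dilative_decomp} to exploit the simple/non-simple dichotomy.

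First, for $c$: the condition on $P = \{o\}$ (noting $m\{o\} = \{o\}$) gives $c = m^{-\delta} c$ for every $m \geq 1$, forcing $c = 0$ unless $\delta = 0$. Next, writing $Z = Z_1 + Z_2$ with $Z_2$ simple (parametrized by $\rho$) and $Z_1$ determined by $(c,g)$, I invoke Theorem~\ref{theorem:dilative_decomp}'s description of simple valuations as $\prod_{\delta\text{ even},\,\delta\neq 0}\Val_\delta$. This implies that $\Val_\delta$ for $\delta$ odd or $\delta = 0$ contains no nonzero simple valuations, while $\Val_\delta$ for $\delta$ even and nonzero consists entirely of simple ones. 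Hence $\rho = 0$ in cases (2) and (3), while $c = g = 0$ in case (1).

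For cases (2) and (3), I determine $g$ by imposing $\delta$-dilativity on $P = [o,e_1]$. Decomposing $[o, me_1] = \bigcup_{k=0}^{m-1}[ke_1,(k+1)e_1]$ via the valuation property and applying translation equivariance gives
\[
f_1(x,y)\sum_{k=0}^{m-1} e^{kx} - c\sum_{k=1}^{m-1} e^{kx} = m^{-\delta} f_1(mx, my),
\]
with $f_1(x,y) = g(x^2) e^{x/2}$. After using $\sum_{k=0}^{m-1} e^{kx} = e^{(m-1)x/2}\sinh(mx/2)/\sinh(x/2)$, this reduces (in cases (2) where $c=0$) to $g(x^2)\sinh(mx/2)/\sinh(x/2) = m^{-\delta} g(m^2 x^2)$. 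Comparing the lowest nonzero coefficient of $g$ forces $\delta = 2n_0 - 1$ with $n_0 = \mathrm{ord}\, g$, so $\delta$ must be odd whenever $g \neq 0$; for odd $\delta$ a direct check shows $g(x) = \alpha x^{\delta/2}\sinh(\sqrt{x}/2)$ is a solution (it lies in $\Q[[x]]$ because $\sinh(\sqrt{x}/2) = \sqrt{x}\cdot Q(x)$ for some $Q\in\Q[[x]]$), and uniqueness up to scalar follows from the same leading-term analysis applied to the difference of two solutions. The $\delta = 0$ case is handled analogously with $c = \alpha$ and $g(x) = \alpha\cosh(\sqrt{x}/2)$, where the functional equation collapses to the identity $\alpha(e^{mx}+1)/2 = \alpha(e^{mx}+1)/2$.

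Finally, case (1) reduces to showing that for simple $Z_2$ parametrized by $\rho$, $Z_2 \in \Val_\delta$ iff $\rho \in \mathcal V_{\delta+2}$. Substituting $(x,y) \mapsto (mx, my)$ into (\ref{simpleZrho}) and using homogeneity $\rho(mu, mv) = m^{\delta+2}\rho(u,v)$ produces a compact formula for $m^{-\delta} Z_2(T)(mx, my)$. For sufficiency, this must equal $Z_2(mT)(x,y)$, which I compute via the standard triangulation of $mT$ into $\binom{m+1}{2}$ translates of $T$ and $\binom{m}{2}$ translates of $e_1+e_2-T$. Using equivariance and the evenness $\rho(-u,-v) = \rho(u,v)$ (which holds because $\Q[[x,y]]^{D_4}$ is concentrated in even degrees by (\ref{D4-natural-basis-intro})), the identity reduces to an algebraic relation in $e^x$, $e^y$, $e^{mx}$, $e^{my}$ that admits a direct verification. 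For necessity, write $\rho = \sum_d \rho_d$ as a sum of homogeneous components; sufficiency places each induced $Z_2^{(d)}$ in $\Val_{d-2}$, and uniqueness of the dilative decomposition (Theorem~\ref{theorem:dilative_decomp}) then forces $\rho = \rho_{\delta+2}$ whenever $Z_2 \in \Val_\delta$. The main obstacle is the closed-form verification of the triangulation identity at the heart of case (1)'s sufficiency; one must evaluate the sums $A_m(x,y) = \sum_{i+j\leq m-1,\,i,j\geq 0} e^{ix+jy}$ in terms of $(e^{mx}-e^{my})/(e^x - e^y)$ and carefully track the $(y-x)^{-1}$ denominators coming from (\ref{simpleZrho}).
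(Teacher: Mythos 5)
Your Case 1 follows essentially the paper's route (tiling of $mT$ for sufficiency, homogeneous decomposition of $\varrho$ plus uniqueness of the dilative decomposition for necessity), and is fine in outline. The problems are in Cases 2 and 3, where your argument has a genuine gap at the step ``hence $\rho=0$''. From Theorem~\ref{theorem:dilative_decomp} you correctly extract that $\Val_\delta$ for $\delta$ odd or zero contains no nonzero \emph{simple} valuation; but the decomposition $Z=Z_1+Z_2$ of Theorem~\ref{PowerSeriesValGL2Z} is \emph{not} the dilative decomposition, and the fact that $Z\in\Val_\delta$ has no simple dilative components does not imply that its simple summand $Z_2$ vanishes. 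Concretely, $Z_2$ is simple, so its dilative components live in the even nonzero degrees, and all you can conclude is $Z_2^{(\delta')}=-Z_1^{(\delta')}$ for every even $\delta'\neq 0$; to get $Z_2=0$ you would need to know that the non-simple valuation $Z_1$ built from $(c,g)$ has no dilative components in even nonzero degrees, which you never establish. The paper closes this differently: it exhibits an explicit nonzero $\delta$-dilative valuation with $\varrho=0$ (and the stated $c,g$) and then invokes $\dim\Val_\delta=1$ to conclude that \emph{every} $\delta$-dilative valuation is a scalar multiple of it. Without that one-dimensionality argument (or a substitute), your necessity claim for $\varrho=0$ is unproven.

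Two further points. First, in Cases 2 and 3 you only verify the dilation identity on $\{o\}$ and on segments; the ``if'' direction of the theorem requires $\delta$-dilativity on two-dimensional polygons, which is exactly how \eqref{d-dilative} is defined. The paper bridges this with the edge formula $Z(P)=\tfrac12\sum_{e\subset P}Z(e)$ obtained from \eqref{ZZ2Z1} and a unimodular triangulation; some such step is needed in your write-up. Second, and conversely, your necessity arguments impose \eqref{d-dilative} on points and segments, which is not literally part of the definition (it only concerns two-dimensional $P$); this is justifiable via the characterization $\pi_rZ\in{\rm Val}^r_{r-\delta}$ from \cite[Lemma 25]{FMR}, since homogeneity of each graded piece holds on all lattice polytopes, but you should say so explicitly.
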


\begin{remark}
Theorem~\ref{PowerSeriesValGL2Z} and Theorem~\ref{thm:dilative_parameters} hold with real or complex coefficients, as well, instead of rational coefficients (actually, any field of characteristic zero can be chosen instead of $\Q$).

In view of Theorem \ref{theorem:dilative_decomp} and Theorem~\ref{thm:dilative_parameters}, for even $d\geq 0$, the dimension of $\mathcal V_{d}$  (and thus of $\Val_{d-2}$ ) agrees with the dimension of the linear space of modular forms of weight $d$ with respect to the full ${\rm SL}(2,\Z)$. The latter space carries a natural product structure, while $\mathcal V$ is not closed under the usual multiplication in $\Q[[x,y]]$. It is not clear whether there exists a natural product on the space of simple $\mathcal G(\Z^2)$ equivariant valuations, or at least on the vector space $\mathcal{V}$.
\end{remark}

Concerning the structure of the paper, results due to B\"or\"oczky, Domokos, Freyer, Haberl, Harcos,  Li \cite{BDFHHL} needed for our paper are discussed in Section~\ref{secGroupAction}.
Theorem~\ref{PowerSeriesValGL2Z} is proved in
Section~\ref{sec-rho-polynomial}, and finally, Theorem~\ref{thm:dilative_parameters} is verified in Section~\ref{sec-Dimension-Modular-Forms}.

\section{Preliminaries}
\label{secGroupAction}

In this section, we summarize some related results obtained by B\"or\"oczky, Domokos, Freyer, Haberl, Harcos,  Li \cite{BDFHHL}. These results were obtained for $\mathcal G(\Z^2)$ equivariant valuations that are valued at \emph{measurable functions} in two variables, where from an algebraic perspective, the action of the affine unimodular group on modular function is equivalent to the definition here in the case of power series in two variables; it is also described by the formula  \eqref{AffineActionOnPowerSeries}. The results quoted here are not specific to measurable functions and hold, which is why we can replace the target space by $\Q[[x,y]]$.


According to Proposition~8 in \cite{BDFHHL},   we have the following:

\begin{prop}
\label{ExpValSL2Z}
For  any two $\mathcal{G}(\Z^2)$ equivariant
valuations $Z,Z':\mathcal{P}(\Z^2)\to \Q[[x,y]]$, the following statements hold:
\begin{description}
\item{(i)} If $Z(\{o\})=Z'(\{o\})$ and $Z([o,e_1])=Z'([o,e_1])$, then
$Z-Z'$ is a simple $\mathcal{G}(\Z^2)$ equivariant
valuation;
\item{(ii)}
If $Z(\{o\})=Z'(\{o\})$, $Z([o,e_1])=Z'([o,e_1])$ and $Z(T)=Z'(T)$,
then $Z=Z'$.
\end{description}
\end{prop}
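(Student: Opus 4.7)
The plan is to consider the difference $W := Z - Z'$, which takes values in the additive group $\Q[[x,y]]$ and is itself a $\mathcal{G}(\Z^2)$ equivariant valuation, since both the valuation identity \eqref{valuationdef} and the action \eqref{AffineActionOnPowerSeries} are linear in the target. Both parts reduce to showing that $W$ vanishes on all lattice polygons of a certain dimension, exploiting transitivity of $\mathcal{G}(\Z^2)$ on ``primitive'' objects and then using the valuation identity to reduce general objects to primitive ones.

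For (i), I first note that $\mathcal{G}(\Z^2)$ acts transitively on $\Z^2$ by translations, so $W(\{o\})=0$ yields $W(\{p\})=0$ for every $p\in \Z^2$. Next, any primitive lattice segment $[p,q]$ (that is, one with $q-p$ a primitive vector) is $\mathcal{G}(\Z^2)$-equivalent to $[o,e_1]$, because a primitive vector extends to a $\Z$-basis of $\Z^2$; equivariance then gives $W([p,q])=0$. A general lattice segment $[p,q]$ is partitioned by its intermediate lattice points $p=p_0,p_1,\dots,p_k=q$ with each $[p_{i-1},p_i]$ primitive, and induction on $k$ via the valuation identity
\[
W([p_0,p_{k-1}])+W([p_{k-1},p_k])=W([p_0,p_k])+W(\{p_{k-1}\})
\]
(noting that $W(\{p_{k-1}\})=0$ and $W([p_{k-1},p_k])=0$) completes the proof that $W$ vanishes on all lattice segments.

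For (ii), part (i) shows $W$ is simple, so it vanishes on all sets of dimension at most $1$; additionally, $W(T)=0$ by hypothesis. The main task is to show $W(T')=0$ for every lattice triangle $T'$; the extension to arbitrary lattice polygons then follows by triangulating along lattice diagonals and using simplicity to annihilate the intersections. I would induct on the doubled area $2\,\mathrm{Area}(T')\in\Z_{>0}$. In the base case $2\,\mathrm{Area}(T')=1$, the triangle is unimodular: the two edge vectors at any vertex form a $\Z$-basis of $\Z^2$, so a suitable element of $\mathcal{G}(\Z^2)$ maps $T$ to $T'$, and equivariance gives $W(T')=W(T)=0$. In the inductive step, either some edge of $T'$ contains a lattice point $q$ in its interior---whence the segment from $q$ to the opposite vertex splits $T'$ into two strictly smaller lattice triangles---or else all edges of $T'$ are primitive but the interior contains a lattice point $q$, in which case connecting $q$ to the three vertices partitions $T'$ into three strictly smaller lattice triangles. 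In either case, combining the valuation identity with simplicity expresses $W(T')$ as a sum of $W$-values on triangles of strictly smaller doubled area, which vanish by the inductive hypothesis.

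The main obstacle is the inductive decomposition step: one must verify that every non-unimodular lattice triangle admits either an interior lattice point or a non-vertex boundary lattice point, and that the resulting sub-triangles have strictly smaller area. Both facts follow from Pick's theorem, which characterizes unimodular lattice triangles as precisely those with doubled area $1$ (equivalently, those containing no non-vertex lattice points). A minor bookkeeping point is the passage from triangles to arbitrary lattice polygons, but any lattice polygon can be triangulated into lattice triangles by iteratively cutting along diagonals between vertices, so that simplicity of $W$ reduces $W(P)$ to a sum of $W$-values over lattice triangles, each of which vanishes.
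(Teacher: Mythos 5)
Your part (i) is correct, and the overall strategy for (ii) (reduce to triangles, induct on normalized area, use transitivity of $\mathcal{G}(\Z^2)$ on unimodular triangles for the base case) is the standard one; note that the paper does not prove this proposition itself but imports it as Proposition~8 of \cite{BDFHHL}. The passage from triangles to general polygons by cutting along diagonals is also fine, since each such cut is a legitimate instance of \eqref{valuationdef}.

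There is, however, a genuine gap in the inductive step of (ii), in the case of a triangle $T'=[a,b,c]$ all of whose edges are primitive but which has an interior lattice point $q$. You propose to write $W(T')$ as $W([a,b,q])+W([b,c,q])+W([c,a,q])$ ``by combining the valuation identity with simplicity,'' but \eqref{valuationdef} only applies when the union of the two sets is convex, and here no two of the three cones have a convex union: each pairwise union has a reflex vertex at $q$. Concretely, for $T'=[(0,0),(2,1),(1,2)]$ (normalized area $3$, interior point $q=(1,1)$, no other boundary lattice points) one checks that the only two-dimensional lattice subpolygons of $T'$ are $T'$ itself and the three cones over $q$, and no two proper lattice subpolygons of $T'$ have union equal to $T'$; hence $W(T')$ is simply not determined by the values of $W$ on proper subpolygons of $T'$ via \eqref{valuationdef}. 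Closing this gap requires either the inclusion--exclusion principle for valuations on lattice polytopes (a nontrivial theorem, cf.\ Betke--Kneser \cite{BeK85} and McMullen \cite{McM09}, not a formal consequence of \eqref{valuationdef}), or a decomposition that leaves the triangle: for instance, attach a unimodular lattice triangle $[a,b,d]$ externally along a suitable edge so that $R=T'\cup[a,b,d]$ is a convex quadrilateral, use \eqref{valuationdef} and simplicity to get $W(R)=W(T')$, and then re-cut $R$ along its other diagonal $[c,d]$, which passes through interior lattice points of $T'$ and produces triangles amenable to further reduction. As written, your induction does not go through in this case.
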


It follows from Proposition~\ref{ExpValSL2Z} that
in order to characterize a $\mathcal{G}(\Z^2)$ equivariant valuation $Z:\mathcal{P}(\Z^2)\to \Q[[x,y]]$,
all we need to characterize are
\begin{equation}
\label{f0f1f2}
Z(\{o\})=f_0\mbox{ and }
Z([o,e_1])=f_1\mbox{ and }
Z(T)=f_2.
\end{equation}
The basic algebraic identities for $f_0$ and $f_1$ are described by Lemma~9 in \cite{BDFHHL}.

\begin{lemma}
\label{f0f1f2properties}
Let
$Z:\mathcal{P}(\Z^2)\to \Q[[x,y]]$ be a $\mathcal{G}(\Z^2)$ equivariant
valuation. Then
$Z(\{o\})=f_0$ and $Z([o,e_1])=f_1$ satisfy the following properties:
\begin{align}
\label{f0GL2ZSL2Z0}
f_0(ax+cy,bx+dy)&=f_0(x,y) \mbox{ \ \ \ for $\left[
\begin{array}{cc}
a&b\\
c&d
\end{array}\right]\in {\rm GL}(2,\Z)$};\\
\label{f1shift}
f_1(-x,-y)&= \exp(-x)f_1(x,y);\\
\label{f1111}
f_1(x,y)&= f_1(x,x+y);\\
\label{f1extra}
f_1(x,y)&= f_1(x,-y).
\end{align}
\end{lemma}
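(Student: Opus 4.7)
The plan is to derive each identity by picking a specific $\Xi\in\mathcal{G}(\Z^2)$ that stabilizes the polygon in question (either $\{o\}$ or $[o,e_1]$) and then reading off the equivariance $Z(\Xi P)=\Xi\cdot Z(P)$ through the formula \eqref{AffineActionOnPowerSeries}. The key observation is that whenever $\Xi P=P$, the equivariance forces $Z(P)$ to be a fixed point of the action of $\Xi$ on $\Q[[x,y]]$, and this translates directly into a functional equation for $Z(P)$.

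For \eqref{f0GL2ZSL2Z0}, I would note that $\{o\}$ is stabilized by $\Xi\in\mathcal{G}(\Z^2)$ if and only if the translation part vanishes ($\alpha=\beta=0$), i.e., $\Xi$ is purely the linear part. Applying equivariance to any $\Xi$ with matrix $\bigl[\begin{smallmatrix}a&b\\c&d\end{smallmatrix}\bigr]\in\mathrm{GL}(2,\Z)$ and zero translation then yields the stated $\mathrm{GL}(2,\Z)$ invariance of $f_0$ via \eqref{AffineActionOnPowerSeries}.

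For \eqref{f1shift}, the segment $[o,e_1]$ is reversed onto itself by the map $\Xi(x,y)=(-x+1,-y)$, corresponding to $a=d=-1$, $b=c=0$, $\alpha=1$, $\beta=0$. The equivariance $f_1 = \Xi\cdot f_1$ reads $f_1(x,y)=\exp(x)\,f_1(-x,-y)$, equivalent to \eqref{f1shift}. For \eqref{f1111}, I would choose a shear that fixes both $o$ and $e_1$: the matrix $\bigl[\begin{smallmatrix}1&1\\0&1\end{smallmatrix}\bigr]$ (with zero translation) gives $\Xi(x,y)=(x+y,y)$, which sends $o\to o$ and $e_1\to e_1$, so preserves $[o,e_1]$ pointwise; the induced action on power series is $f(x,y)\mapsto f(x,x+y)$, yielding \eqref{f1111}. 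Finally, for \eqref{f1extra}, the reflection $\Xi(x,y)=(x,-y)$ (matrix $\bigl[\begin{smallmatrix}1&0\\0&-1\end{smallmatrix}\bigr]$, determinant $-1$, hence in $\mathrm{GL}(2,\Z)$, zero translation) also fixes $o$ and $e_1$, and its action on $f(x,y)$ is $f(x,-y)$, giving \eqref{f1extra}.

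There is no serious obstacle here: each identity is essentially the assertion that $Z(P)$ is invariant (up to the exponential factor) under a carefully chosen unimodular stabilizer of $P$. The only mild bookkeeping step is to be careful that the convention in \eqref{AffineActionOnPowerSeries} uses the transpose of the linear part when substituting the variables, which is what produces the exponential factor $\exp(x)$ (rather than something else) in \eqref{f1shift} and explains why the shear $\bigl[\begin{smallmatrix}1&1\\0&1\end{smallmatrix}\bigr]$ produces the substitution $y\mapsto x+y$ in \eqref{f1111} rather than $x\mapsto x+y$.
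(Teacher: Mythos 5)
Your proposal is correct: all four identities follow exactly as you describe from $Z(\Xi P)=\Xi\cdot Z(P)$ applied to the chosen stabilizers, and your bookkeeping with the transposed substitution in \eqref{AffineActionOnPowerSeries} (giving the factor $\exp(x)$ in \eqref{f1shift} and the substitution $y\mapsto x+y$ in \eqref{f1111}) is accurate. The paper itself only cites this as Lemma~9 of \cite{BDFHHL} without reproducing a proof, and your stabilizer argument is the standard one underlying that reference.
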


In turn, Proposition~11 in \cite{BDFHHL} says that the conditions \eqref{f0GL2ZSL2Z0}, \eqref{f1shift}, \eqref{f1111} and
 \eqref{f1extra} are sufficient.

\begin{prop}
\label{lowdim}
For $f_0,f_1\in\Q[[x,y]]$ satisfying  \eqref{f0GL2ZSL2Z0}, \eqref{f1shift}, \eqref{f1111} and
 \eqref{f1extra}, there exists
a  $\mathcal{G}(\Z^2)$ equivariant valuation $Z_1:\mathcal{P}(\Z^2)\to\Q[[x,y]]$ satisfying
$Z_1(\{o\})=f_0$ and $Z_1([o,e_1])=f_1$ and
$$
Z_1(T)(x,y)=\mbox{$\frac12$}\,f_1(x,y)+\mbox{$\frac12$}\,f_1(y,-x)
+\mbox{$\frac12$}\,\exp(x)\cdot f_1(-x+y,-x).
$$
\end{prop}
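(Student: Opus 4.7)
The plan is to construct $Z_1$ skeleton-wise: first on lattice points, then on lattice segments, then on the standard unimodular triangle $T$, and finally on arbitrary lattice polygons via unimodular triangulations. On singletons, set $Z_1(\{p\}) := \Xi_p \cdot f_0$ where $\Xi_p$ is translation by $p$ acting on $\Q[[x,y]]$ via \eqref{AffineActionOnPowerSeries}; this is compatible with the full $\mathcal{G}(\Z^2)$-action since \eqref{f0GL2ZSL2Z0} says precisely that the $\mathrm{GL}(2,\Z)$-stabilizer of $o$ fixes $f_0$.

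For a primitive lattice edge $[p,q]$ (meaning $q-p$ is a primitive lattice vector), choose any $\Xi\in\mathcal{G}(\Z^2)$ with $\Xi([o,e_1])=[p,q]$ and set $Z_1([p,q]):=\Xi\cdot f_1$. Independence of the choice of $\Xi$ reduces to invariance of $f_1$ under the stabilizer of $[o,e_1]$ in $\mathcal{G}(\Z^2)$; this stabilizer is generated by three elements, the shear $(x,y)\mapsto(x+y,y)$, the reflection $(x,y)\mapsto(x,-y)$, and the orientation-reversing involution $(x,y)\mapsto(1-x,y)$, and invariance of $f_1$ under these generators is exactly the content of \eqref{f1111}, \eqref{f1extra}, and the combination of \eqref{f1shift} with \eqref{f1extra}, respectively. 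For non-primitive segments $[p,p+kv]$ with $k\ge 2$, the valuation property forces the telescoping definition $Z_1([p, p+kv]) := \sum_{j=0}^{k-1} Z_1([p+jv, p+(j+1)v]) - \sum_{j=1}^{k-1} Z_1(\{p+jv\})$.

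Next, set $Z_1(T)$ equal to the prescribed formula and extend by equivariance to every unimodular triangle. The stabilizer of $T$ in $\mathcal{G}(\Z^2)$ is isomorphic to the symmetric group $S_3$ on the three vertices $o,e_1,e_2$, generated by the $3$-cycle $\Xi(x,y)=(-x-y+1,x)$ (which cyclically permutes the edges $[o,e_1]$, $[e_1,e_2]$, $[e_2,o]$) and one transposition. The prescribed formula is built precisely as $\tfrac{1}{2}$ times the sum of the three edge contributions obtained by applying $\Xi^0,\Xi^1,\Xi^2$ to $f_1$, so invariance under the $3$-cycle is automatic; invariance under the transposition follows by a direct computation that again uses the identities of Lemma~\ref{f0f1f2properties} to rewrite the three terms as a permutation of themselves with the correct exponential prefactors.

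Finally, for a general lattice polygon $P$, choose a unimodular triangulation (which exists in dimension two for every lattice polygon) and define $Z_1(P)$ by inclusion-exclusion over the resulting simplicial complex, using the already-defined values on triangles, edges, and vertices. The main obstacle is to show that this is well-defined independently of the triangulation and that the resulting $Z_1$ satisfies the valuation identity \eqref{valuationdef}. Both statements reduce to the single bistellar-flip identity for a lattice parallelogram triangulated in its two possible ways, because any two unimodular triangulations of a planar lattice polygon are connected by a sequence of such flips. This parallelogram identity unwinds to an explicit computation from the formula for $Z_1(T)$ combined with the functional equations for $f_1$; the full valuation property \eqref{valuationdef} for $P\cup Q$ convex then follows by passing to a common refinement. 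Equivariance of the final $Z_1$ under $\mathcal{G}(\Z^2)$ is immediate since the construction is equivariant at every step.
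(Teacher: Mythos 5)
First, note that the paper does not actually prove Proposition~\ref{lowdim}: it is imported verbatim as Proposition~11 of \cite{BDFHHL}, so there is no in-paper argument to measure yours against. Judged on its own, your skeleton-wise construction is a legitimate route, and your local verifications are correct: the stabilizer of $\{o\}$ is $\mathrm{GL}(2,\Z)$, matching \eqref{f0GL2ZSL2Z0}; the stabilizer of $[o,e_1]$ is generated by the three maps you list, and invariance of $f_1$ under them is exactly \eqref{f1111}, \eqref{f1extra}, and the combination of \eqref{f1shift} with \eqref{f1extra}; the stabilizer of $T$ is $S_3$, the displayed formula for $Z_1(T)$ is (modulo the functional equations) the average of $f_1$ over the orbit of the $3$-cycle, and invariance under a transposition does reduce to the same identities.

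The gap sits in the final step, which is where all the difficulty of the proposition lives. You assert but do not verify the parallelogram (flip) identity, and that identity is the computational heart of the proof. (It does hold: for the unit square both triangulations evaluate to $\frac12(1+e^y)f_1(x,y)+\frac12(1+e^x)f_1(y,-x)$, using $f_1(x+y,y)=f_1(x+y,-x)=f_1(x+y,-y)$ and $f_1(y-x,y)=f_1(y-x,-x)$; so your approach is viable, but this is precisely the computation a proof must contain.) Two further points are treated too lightly. Flip-connectivity of unimodular (i.e.\ full) triangulations of a lattice polygon is true but not obvious and needs a reference or an argument. More importantly, well-definedness of the inclusion--exclusion expression is not yet the valuation property \eqref{valuationdef}: you must still show, for arbitrary $P,Q$ with $P\cup Q$ convex --- including the cases where $P\cap Q$ is a segment or a point, and where $P$ or $Q$ is lower-dimensional --- that a unimodular triangulation of $P\cup Q$ restricting to triangulations of $P$, $Q$ and $P\cap Q$ exists, and that the weighted face sums (with the correct coefficients $+1$ on triangles, $-1$ on interior edges, $+1$ on interior vertices, $0$ on boundary faces) combine as required; ``passing to a common refinement'' names the strategy but does not carry it out. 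As written, the proposal is a correct plan whose decisive computation and whose verification of the valuation axiom are both left open.
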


Proposition~\ref{ExpValSL2Z} (i) and Proposition~\ref{lowdim} yield the following.

\begin{coro}
\label{ReductionToSimple}
For any  $\mathcal{G}(\Z^2)$ equivariant
valuation $Z:\mathcal{P}(\Z^2)\to \Q[[x,y]]$, let
$Z_1$ be the $\mathcal{G}(\Z^2)$ equivariant
valuation satisfying $Z_1(\{o\})=Z(\{o\})$ and
$Z_1([o,e_1])=Z([o,e_1])$
constructed in Proposition~\ref{lowdim}. Then $Z_2=Z-Z_1$
is a $\mathcal{G}(\Z^2)$ equivariant simple
valuation.
\end{coro}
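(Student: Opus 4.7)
The statement is a direct combination of the two preceding results, so the plan is essentially bookkeeping. The first step is to extract, from the given $\mathcal{G}(\Z^2)$ equivariant valuation $Z$, the two formal power series $f_0 := Z(\{o\})$ and $f_1 := Z([o,e_1])$, and invoke Lemma~\ref{f0f1f2properties} to verify that this pair $(f_0,f_1)$ satisfies the functional equations \eqref{f0GL2ZSL2Z0}, \eqref{f1shift}, \eqref{f1111}, and \eqref{f1extra}. This checks that the input data of Proposition~\ref{lowdim} are met.

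Next, I would apply Proposition~\ref{lowdim} to these $f_0,f_1$ in order to produce the $\mathcal{G}(\Z^2)$ equivariant valuation $Z_1$ with $Z_1(\{o\})=f_0=Z(\{o\})$ and $Z_1([o,e_1])=f_1=Z([o,e_1])$. At this point I would briefly record that the difference $Z_2 := Z - Z_1$, taken pointwise in $\Q[[x,y]]$, is again a $\mathcal{G}(\Z^2)$ equivariant valuation: the defining identity \eqref{valuationdef} is additive in the target abelian group $(\Q[[x,y]],+)$, and the action \eqref{AffineActionOnPowerSeries} is $\Q$-linear in $f$, so equivariance is preserved under subtraction.

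Finally, since $Z_2(\{o\})=0$ and $Z_2([o,e_1])=0$, I would apply Proposition~\ref{ExpValSL2Z}(i) with $Z'=Z_1$ (equivalently, use the version of that proposition phrased as ``a valuation vanishing on $\{o\}$ and $[o,e_1]$ is simple'') to conclude that $Z_2$ is simple, i.e.\ $Z_2(P)=0$ whenever $\dim P\le 1$.

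There is no genuine obstacle here: the result is purely a packaging of Lemma~\ref{f0f1f2properties}, Proposition~\ref{lowdim}, and Proposition~\ref{ExpValSL2Z}(i). The only point that deserves an explicit mention is the trivial but necessary check that the class of $\mathcal{G}(\Z^2)$ equivariant valuations $\mathcal{P}(\Z^2)\to\Q[[x,y]]$ is closed under the pointwise subtraction coming from the additive structure of the target ring of formal power series.
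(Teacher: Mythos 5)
Your proposal is correct and matches the paper's (essentially one-line) derivation: the corollary is obtained exactly by feeding $f_0=Z(\{o\})$, $f_1=Z([o,e_1])$ (whose required functional equations come from Lemma~\ref{f0f1f2properties}) into Proposition~\ref{lowdim} to build $Z_1$, and then applying Proposition~\ref{ExpValSL2Z}(i) with $Z'=Z_1$. Note that Proposition~\ref{ExpValSL2Z}(i) already asserts that the difference is a simple $\mathcal{G}(\Z^2)$ equivariant valuation, so your separate closure-under-subtraction check, while harmless, is subsumed by that statement.
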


Our next goal is to understand the algebraic properties of simple $\mathcal{G}(\Z^2)$ equivariant
 valuation $Z:\mathcal{P}(\Z^2)\to \Q[[x,y]]$. The functional equations for $Z(T)=f_2$ are described by  Lemma~13 in  \cite{BDFHHL}.

\begin{lemma}
\label{simplef2algebraic-properties}
For any simple $\mathcal{G}(\Z^2)$ equivariant valuation $Z:\mathcal{P}(\Z^2)\to \Q[[x,y]]$,
$Z(T)=f_2$ satisfies the following properties: 
\begin{align}
\label{f2simple1}
f_2(-x+y,-x)&= \exp(-x)\cdot f_2(x,y);\\
\label{f2simple2}
f_2(x,y)+\exp(x+y)\cdot f_2(-x,-y)&=f_2(x,x+y)+f_2(x+y,y);\\
\label{f2simple3}
f_2(x,y)&= f_2(y,x).
\end{align}
\end{lemma}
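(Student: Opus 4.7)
The plan is to derive the three functional equations from the equivariance identity $Z(\Xi P) = \Xi \cdot Z(P)$ together with simpleness of $Z$. The identities \eqref{f2simple1} and \eqref{f2simple3} will come from elements $\Xi \in \mathcal G(\Z^2)$ satisfying $\Xi T = T$, where $T = [o,e_1,e_2]$; such $\Xi$ permute the three vertices of $T$. The remaining identity \eqref{f2simple2} will come from applying the valuation identity \eqref{valuationdef} to the two different triangulations of the unit square $S = [o,e_1,e_1+e_2,e_2]$. In each case the argument is purely algebraic once one correctly translates the affine map on $\R^2$ into its induced action on $\Q[[x,y]]$ via \eqref{AffineActionOnPowerSeries}.

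For \eqref{f2simple3}, I take $\Xi_0(x,y)=(y,x)$ (linear part $\bigl[\begin{smallmatrix}0&1\\1&0\end{smallmatrix}\bigr]$, no translation), which swaps $e_1$ and $e_2$ and fixes $o$; hence $\Xi_0 T = T$. Formula \eqref{AffineActionOnPowerSeries} gives $(\Xi_0\cdot f_2)(x,y)=f_2(y,x)$, and equivariance therefore forces $f_2(x,y)=f_2(y,x)$. For \eqref{f2simple1}, I take the cyclic rotation of vertices $o\mapsto e_1\mapsto e_2\mapsto o$, realized by $\Xi_1(x,y)=(-x-y+1,\,x)$ with linear part $\bigl[\begin{smallmatrix}-1&-1\\1&0\end{smallmatrix}\bigr]$ (determinant $+1$) and translation $(1,0)$; again $\Xi_1 T = T$. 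The action evaluates to $(\Xi_1\cdot f_2)(x,y)=e^x f_2(-x+y,-x)$, and equivariance yields $f_2(x,y) = e^x f_2(-x+y,-x)$, which rearranges to \eqref{f2simple1}.

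For \eqref{f2simple2}, I consider the unit square $S=[o,e_1,e_1+e_2,e_2]$ triangulated in two ways. Along the diagonal $[e_1,e_2]$ one has $S = T \cup T'$ with $T' = [e_1,e_2,e_1+e_2]$ and $T \cap T' = [e_1,e_2]$. Noting that $T' = \Xi' T$ for $\Xi'(v) = e_1+e_2-v$, formula \eqref{AffineActionOnPowerSeries} gives $Z(T') = e^{x+y} f_2(-x,-y)$, while simpleness makes $Z(T \cap T')$ vanish. Along the diagonal $[o,e_1+e_2]$ one has $S = T_a \cup T_b$ with $T_a = [o,e_1,e_1+e_2]$, $T_b = [o,e_2,e_1+e_2]$, and $T_a \cap T_b = [o,e_1+e_2]$. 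The unimodular maps $\bigl[\begin{smallmatrix}1&1\\0&1\end{smallmatrix}\bigr]$ and $\bigl[\begin{smallmatrix}1&0\\1&1\end{smallmatrix}\bigr]$ (with zero translation) send $T$ to $T_a$ and $T_b$ respectively, so \eqref{AffineActionOnPowerSeries} produces $Z(T_a) = f_2(x,x+y)$ and $Z(T_b) = f_2(x+y,y)$; here too the segment intersection has $Z$-value zero. Two applications of \eqref{valuationdef} then give $Z(T) + Z(T') = Z(S) = Z(T_a) + Z(T_b)$, which is exactly \eqref{f2simple2}.

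The only subtle point is the transpose implicit in \eqref{AffineActionOnPowerSeries}: the matrix appearing inside the argument of $f$ is the transpose of the linear part of $\Xi$, so the off-diagonal entries must be flipped when computing $\Xi\cdot f$. Once this bookkeeping is tracked, the verification is entirely routine, and the real thought lies only in selecting the stabilizer elements for \eqref{f2simple1}, \eqref{f2simple3} and the square decomposition for \eqref{f2simple2}.
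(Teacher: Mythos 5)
Your proof is correct: I checked each of the affine maps you chose (the swap $(x,y)\mapsto(y,x)$, the cyclic vertex rotation $(x,y)\mapsto(-x-y+1,x)$, the point reflection $v\mapsto e_1+e_2-v$, and the two shears onto $T_a$, $T_b$), and in every case the induced action computed via \eqref{AffineActionOnPowerSeries} — including the transpose bookkeeping you flag — yields exactly the claimed identities, with simpleness killing the segment terms in both triangulations of the unit square. The paper itself does not prove this lemma but imports it as Lemma~13 of \cite{BDFHHL}; your derivation is the natural stabilizer-plus-two-triangulations argument one would expect that reference to contain, so there is nothing to fault and no substantive methodological divergence to report.
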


Proposition~14 in  \cite{BDFHHL} states the reverse statement.

\begin{prop}
\label{simplef2algebraic}
For any $f_2\in\Q[[x,y]]$ satisfying
the properties \eqref{f2simple1}, \eqref{f2simple2} and \eqref{f2simple3}
in Lemma~\ref{simplef2algebraic-properties},
there exists a unique simple $\mathcal{G}(\Z^2)$ equivariant
valuation $Z:\mathcal{P}(\Z^2)\to \Q[[x,y]]$ such that
$Z(T)=f_2$.
\end{prop}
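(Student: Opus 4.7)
The plan is first to dispose of uniqueness, then build $Z$ explicitly from $f_2$. Uniqueness is immediate from Proposition~\ref{ExpValSL2Z}(ii): a simple valuation satisfies $Z(\{o\})=0$ and $Z([o,e_1])=0$, so two simple valuations agreeing on $T$ coincide on the triple \eqref{f0f1f2} and hence everywhere. For existence, I set $Z(P):=0$ whenever $\dim P\le 1$ and define $Z$ on two-dimensional lattice polygons via unimodular triangulations.

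Every two-dimensional lattice polygon admits a unimodular triangulation $P=T_1\cup\cdots\cup T_k$ (each $T_i$ a lattice triangle of area $1/2$), and since $\mathcal G(\Z^2)$ acts transitively on unimodular triangles, one can write $T_i=\Xi_i T$ for some $\Xi_i\in\mathcal G(\Z^2)$ and set
\[
Z(T_i):=\Xi_i\cdot f_2,\qquad Z(P):=\sum_{i=1}^k Z(T_i).
\]
The first well-definedness check is that $\Xi_i\cdot f_2$ depends only on $T_i$. This is equivalent to $f_2$ being fixed by the stabilizer of $T$ in $\mathcal G(\Z^2)$, a dihedral group of order six permuting $\{o,e_1,e_2\}$, generated by the $3$-cycle $\sigma(x,y)=(1-x-y,\,x)$ and the transposition $\tau(x,y)=(y,x)$. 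A short computation from \eqref{AffineActionOnPowerSeries} gives $\sigma\cdot f=\exp(x)\,f(-x+y,-x)$ and $\tau\cdot f=f(y,x)$, so invariance of $f_2$ under $\sigma$ is exactly \eqref{f2simple1} and invariance under $\tau$ is exactly \eqref{f2simple3}.

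The main obstacle is independence of the chosen unimodular triangulation. For this, I would invoke the classical fact that any two unimodular triangulations of a planar lattice polygon are connected by a sequence of bistellar flips, each of which replaces two unimodular triangles sharing an edge (whose union is a unit-area lattice parallelogram) by the two triangles cut off by the opposite diagonal. Since $\mathcal G(\Z^2)$ acts transitively on unit-area lattice parallelograms, it suffices to verify flip invariance on the standard one $[o,e_1,e_1+e_2,e_2]$: the two triangulations yield $f_2(x,y)+\exp(x+y)\,f_2(-x,-y)$ and $f_2(x,x+y)+f_2(x+y,y)$ respectively, and their equality is exactly the identity \eqref{f2simple2}. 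Once $Z$ is well-defined, the valuation property follows by taking, when $P\cup Q$ is convex, a common unimodular triangulation of $P\cup Q$ compatible with those of $P$ and $Q$ (and hence of $P\cap Q$) and summing; $\mathcal G(\Z^2)$-equivariance and simplicity are built into the construction.
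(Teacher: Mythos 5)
The paper does not actually prove this proposition; it imports it verbatim as Proposition~14 of \cite{BDFHHL}. Your construction is the natural Betke--Kneser--style argument (define $Z$ on a unimodular triangulation and check consistency), and its core computations are correct: the stabilizer of $T$ in $\mathcal G(\Z^2)$ is indeed generated by your $\sigma$ and $\tau$, whose invariance conditions are exactly \eqref{f2simple1} and \eqref{f2simple3}; the two triangulations of the unit square produce $f_2(x,y)+\exp(x+y)f_2(-x,-y)$ and $f_2(x,x+y)+f_2(y,x+y)$, which agree by \eqref{f2simple2} together with the symmetry \eqref{f2simple3}; and uniqueness is immediate from Proposition~\ref{ExpValSL2Z}(ii). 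So the strategy is sound and, as far as one can tell, independent of whatever route \cite{BDFHHL} takes.

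Two steps, however, carry real weight and are asserted rather than argued. First, flip-connectivity of unimodular (i.e.\ full) triangulations of a lattice polygon is a genuine theorem, not a triviality: you should cite it precisely (it is the planar case of the connectivity of the flip graph of full triangulations of a point configuration, valid also with collinear points), and you should record the easy but necessary lemma that two adjacent unimodular triangles whose union is a proper convex quadrilateral automatically form a unimodular parallelogram --- the union can also degenerate to a triangle, in which case no flip is available, and the connectivity theorem must be the version that uses only the parallelogram flips. Second, the valuation property is not quite as automatic as ``take a common unimodular triangulation'': an arbitrary unimodular triangulation of $P\cup Q$ need \emph{not} restrict to triangulations of $P$ and $Q$ (a unimodular triangle inside $P\cup Q$ can meet both $\operatorname{int}(P\setminus Q)$ and $\operatorname{int}(Q\setminus P)$). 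You need to construct a triangulation refining the subdivision induced by $P$ and $Q$, which in turn requires knowing that, when $P\cup Q$ is convex, edges of $P$ and edges of $Q$ meet only at lattice points or along common lattice subsegments (a local convexity argument at a transversal crossing point), so that the cells of the induced subdivision are lattice polygons that can each be unimodularly triangulated compatibly along shared edges. You should also dispose explicitly of the cases where $P\cap Q$ or one of $P,Q$ is lower-dimensional. None of this breaks the proof, but as written these are the load-bearing gaps a referee would ask you to fill.
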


Finally, Lemma~21 in  \cite{BDFHHL} observes that in Lemma~\ref{simplef2algebraic-properties} and Proposition~\ref{simplef2algebraic}, we can exchange \eqref{f2simple2} with \eqref{f23up}.

\begin{lemma}
\label{f23uplemma}
 Assuming that $f_2\in\Q[[x,y]]$ satisfies \eqref{f2simple1} and \eqref{f2simple3}, we have $f_2$ also satisfies
\eqref{f2simple2} if and only if 
\begin{equation}
\label{f23up}
f_2(x,y)+\exp(x)\cdot f_2(y-x,y)=f_2(x,x+y)+f_2(y,x+y).
\end{equation}
\end{lemma}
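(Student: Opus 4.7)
The plan is to show that the two four-term identities \eqref{f2simple2} and \eqref{f23up} differ only by a cosmetic rewriting of two terms, which is forced by \eqref{f2simple1} and \eqref{f2simple3}. So the equivalence will follow from a short reversible calculation.

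First I would apply \eqref{f2simple1} to rewrite $f_2(-x,-y)$. Seeking $X,Y$ with $-X+Y=-x$ and $-X=-y$ gives $X=y$, $Y=y-x$, so \eqref{f2simple1} yields
\begin{equation*}
f_2(-x,-y) \;=\; \exp(-y)\cdot f_2(y,\,y-x).
\end{equation*}
Multiplying by $\exp(x+y)$ and using the symmetry \eqref{f2simple3} $f_2(y,y-x)=f_2(y-x,y)$, one obtains the key identity
\begin{equation*}
\exp(x+y)\cdot f_2(-x,-y) \;=\; \exp(x)\cdot f_2(y-x,\,y).
\end{equation*}

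Next I would substitute this into the left-hand side of \eqref{f2simple2} and use \eqref{f2simple3} once more, in the form $f_2(x+y,y)=f_2(y,x+y)$, on the right-hand side. After these two substitutions \eqref{f2simple2} becomes exactly \eqref{f23up}, proving the forward implication. For the converse, the same two replacements (applied in reverse, which is legal because \eqref{f2simple1} and \eqref{f2simple3} are equalities) turn \eqref{f23up} back into \eqref{f2simple2}.

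There is no real obstacle here; the only thing to watch is the bookkeeping of the substitution in \eqref{f2simple1} so that the exponential factor $\exp(x+y)$ collapses correctly to $\exp(x)$. Since both manipulations are strict equalities derivable from \eqref{f2simple1} and \eqref{f2simple3} alone, the argument is symmetric in the two directions and the lemma follows.
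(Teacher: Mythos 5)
Your proof is correct: the substitution $x\mapsto y$, $y\mapsto y-x$ in \eqref{f2simple1} gives $\exp(x+y)f_2(-x,-y)=\exp(x)f_2(y,y-x)=\exp(x)f_2(y-x,y)$ by \eqref{f2simple3}, and together with $f_2(x+y,y)=f_2(y,x+y)$ this converts \eqref{f2simple2} into \eqref{f23up} term by term, reversibly. The paper itself only cites this statement (Lemma~21 of \cite{BDFHHL}) without reproducing a proof, and your argument is the natural one; no gaps.
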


\section{Parametrization of equivariant valuations}
\label{sec-rho-polynomial}

In this section, we combine the tools summarized in Section~\ref{secGroupAction} with new results in order to prove Theorem~\ref{PowerSeriesValGL2Z}. We note that
according to Sambale \cite{Sam23}, for any formal power series of the form
$\varphi(x)=1+\sum_{n=1}^\infty b_nx^n\in\Q[[x]]$, there exists a formal power series $\psi\in \Q[[x]]$ such that
\begin{equation}
\label{logarithm-PowerSeries}
\exp(\psi)=\varphi.
\end{equation}

The following statement is well-known in invariant theory.

\begin{lemma}
\label{fyx+y}
If $f\in\Q[[x,y]]$ satisfies $f(x,y)=f(x,x+y)$, then there exists $\tilde{f}\in \Q[[x]]$
such that $f(x,y)=\tilde{f}(x)$.
\end{lemma}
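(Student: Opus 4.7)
My plan is to reduce the problem to the case of a homogeneous polynomial by decomposing $f$ into its homogeneous components. The substitution $y\mapsto x+y$ is a graded $\Q$-algebra endomorphism (since $x+y$ is homogeneous of degree $1$), so writing $f=\sum_{k\ge 0} f^{(k)}$ with $f^{(k)}\in\Q[x,y]$ homogeneous of total degree $k$, the functional equation $f(x,y)=f(x,x+y)$ decouples into the independent equations $f^{(k)}(x,y)=f^{(k)}(x,x+y)$ for every $k\ge 0$. This reduction is harmless at the level of formal power series, as each homogeneous part involves only finitely many coefficients.

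Next I would handle a single homogeneous polynomial $p(x,y)=\sum_{i+j=d} a_{ij}\, x^i y^j$ of degree $d\ge 1$ satisfying $p(x,y)=p(x,x+y)$. Expanding $(x+y)^j$ via the binomial theorem and comparing coefficients of $x^{d-k}y^k$ yields the triangular system
$$
\sum_{j>k}\binom{j}{k}\, a_{d-j,j}=0,\qquad 0\le k\le d-1.
$$
The $k=d-1$ equation reads $d\cdot a_{0,d}=0$, giving $a_{0,d}=0$. Descending on $k$, each equation isolates $(k+1)\,a_{d-k-1,\,k+1}$ in terms of coefficients $a_{d-j,j}$ with $j>k+1$ already known to vanish. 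Since $k+1\ne 0$ in $\Q$, a straightforward downward induction forces $a_{i,j}=0$ whenever $j\ge 1$, leaving $p(x,y)=a_{d,0}\,x^d$.

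Combining the two steps, each homogeneous component must satisfy $f^{(k)}(x,y)=\alpha_k x^k$ for some $\alpha_k\in\Q$, and reassembling the pieces gives $f(x,y)=\sum_{k\ge 0}\alpha_k x^k=\tilde f(x)\in\Q[[x]]$, as required.

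There is no real obstacle: the only step that uses a non-trivial feature of the base field is the downward induction, where division by $k+1$ relies on characteristic zero. (The statement genuinely fails in characteristic $p$, as $y^p-x^{p-1}y$ is fixed by $y\mapsto y+x$ using $(y+x)^p=y^p+x^p$.) Everything else is straightforward bookkeeping.
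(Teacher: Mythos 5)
Your proof is correct. Note that the paper does not actually prove this lemma --- it is stated as ``well-known in invariant theory'' with no argument supplied --- so there is nothing to compare against; your homogeneous-component decomposition plus the triangular system in the coefficients (with the downward induction killing $a_{i,j}$ for $j\geq 1$ via division by $k+1$) is a complete and standard way to establish it. Your closing remark correctly isolates where characteristic zero enters and matches the paper's own caveat that the results hold over any field of characteristic zero.
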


 It follows that if both $f(x,y)=f(x,x+y)$ and $f(x,y)=f(x+y,y)$ hold for $f\in\Q[[x,y]]$, then $f$ is a constant power series.



Next we show that all we need to understand are the 
simple 
$\mathcal G(\Z^2)$ equivariant
valuations.

\begin{prop}
\label{PowerSeriesValGL2ZZ1}
For any $\mathcal G(\Z^2)$ equivariant
valuation $Z:\mathcal{P}(\Z^2)\to \Q[[x,y]]$, 
$Z(\{o\})$
 is a constant power series, and
\begin{equation}
f_1(x,y)=Z([o,e_1])(x,y)=
g(x^2)\cdot \exp\left(\mbox{$\frac{1}2$}\,x\right)
\end{equation}
holds for some $g\in \Q[[x]]$; moreover, 
there exists a simple 
$\mathcal G(\Z^2)$ equivariant
valuation $Z_2:\mathcal{P}(\Z^2)\to \Q[[x,y]]$
such that
\begin{equation}
\label{ZZ2Z1-PowerSeries}
Z(T)(x,y)=Z_2(T)(x,y)+\mbox{$\frac12$}\,f_1(x,y)+
\mbox{$\frac12$}\,f_1(y,-x)
+\mbox{$\frac12$}\,\exp(x)\cdot f_1(-x+y,-x).
\end{equation}

On the other hand, if $c\in\Q$ and 
$f_1(x,y)=g(x^2)\cdot \exp\left(\mbox{$\frac{1}2$}\,x\right)$
for some $g\in \Q[[x]]$,
then there exists a  $\mathcal G(\Z^2)$ equivariant
valuation $Z:\mathcal{P}(\Z^2)\to \Q[[x,y]]$  such that $Z(\{o\})\equiv c$ and 
$Z([o,e_1])=f_1$.
\end{prop}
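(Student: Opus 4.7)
The plan is to combine the functional equations for $f_0 = Z(\{o\})$ and $f_1 = Z([o,e_1])$ supplied by Lemma~\ref{f0f1f2properties} with Lemma~\ref{fyx+y} to pin down the explicit forms of $f_0$ and $f_1$, and then to invoke Proposition~\ref{lowdim} and Corollary~\ref{ReductionToSimple} to extract the simple valuation $Z_2$. First I would show that $f_0$ is a constant power series: applying \eqref{f0GL2ZSL2Z0} to $\left[\begin{array}{cc}1&0\\1&1\end{array}\right]\in \mathrm{GL}(2,\Z)$ gives $f_0(x,x+y)=f_0(x,y)$, so Lemma~\ref{fyx+y} yields $f_0(x,y)=\tilde f_0(x)$ for some $\tilde f_0\in \Q[[x]]$; applying \eqref{f0GL2ZSL2Z0} to the swap $\left[\begin{array}{cc}0&1\\1&0\end{array}\right]$ then forces $\tilde f_0(x)=\tilde f_0(y)$ as elements of $\Q[[x,y]]$, so $\tilde f_0$ is a constant $c\in\Q$.

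For $f_1$, equation \eqref{f1111} together with Lemma~\ref{fyx+y} yields $f_1(x,y)=h(x)$ for some $h\in\Q[[x]]$ (which makes \eqref{f1extra} automatic), and \eqref{f1shift} becomes $h(-x)=\exp(-x)\,h(x)$. Since $\exp(\tfrac12 x)$ is a unit in $\Q[[x]]$, setting $u(x):=\exp(-\tfrac12 x)\,h(x)$ converts this into $u(-x)=u(x)$, i.e., $u$ is even. Writing an even formal power series in one variable as a power series in $x^2$ gives $u(x)=g(x^2)$ for some $g\in\Q[[x]]$, whence $f_1(x,y)=g(x^2)\exp(\tfrac12 x)$.

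With these forms in hand, $f_0\equiv c$ and $f_1$ satisfy the hypotheses of Proposition~\ref{lowdim}, producing a valuation $Z_1$ with the prescribed values at $\{o\}$ and $[o,e_1]$ and with $Z_1(T)$ given by the displayed formula; Corollary~\ref{ReductionToSimple} then shows that $Z_2:=Z-Z_1$ is a simple $\mathcal G(\Z^2)$ equivariant valuation, and rearranging $Z(T)=Z_2(T)+Z_1(T)$ yields \eqref{ZZ2Z1-PowerSeries}. For the converse, the hypotheses of Proposition~\ref{lowdim} are all immediate from the explicit formulas: the constant $c$ is $\mathrm{GL}(2,\Z)$ invariant, $f_1=g(x^2)\exp(\tfrac12 x)$ is independent of $y$ so \eqref{f1111} and \eqref{f1extra} hold trivially, and \eqref{f1shift} reduces to the identity $\exp(-x)\exp(\tfrac12 x)=\exp(-\tfrac12 x)$. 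The only conceptual step is the factorization trick $h(x)=\exp(\tfrac12 x)u(x)$, which converts the twisted symmetry in \eqref{f1shift} into ordinary evenness; everything else is a direct application of the infrastructure of Section~\ref{secGroupAction}.
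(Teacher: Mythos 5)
Your proposal is correct and follows the same overall route as the paper: extract the functional equations for $f_0$ and $f_1$ from Lemma~\ref{f0f1f2properties}, reduce to one variable via Lemma~\ref{fyx+y}, and then obtain $Z_2=Z-Z_1$ from Proposition~\ref{lowdim} and Corollary~\ref{ReductionToSimple}. The one place where you genuinely diverge is the solution of the twisted symmetry $h(x)=\exp(x)h(-x)$: the paper factors out the lowest-degree term $a_dx^d$, checks $d$ is even, and then takes a formal logarithm $\psi$ of the unit part via \eqref{logarithm-PowerSeries} to get $\psi(x)=x+\psi(-x)$ and hence $\psi(x)=\tfrac12x+h(x^2)$; your substitution $u(x)=\exp(-\tfrac12x)h(x)$ turns the relation directly into evenness of $u$ and avoids the logarithm machinery altogether, which is shorter and arguably cleaner (it also works verbatim for $h=0$, a case the paper has to set aside). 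The only blemish is cosmetic: with the paper's convention in \eqref{f0GL2ZSL2Z0}, the matrix $\left[\begin{smallmatrix}1&0\\1&1\end{smallmatrix}\right]$ yields $f_0(x+y,y)=f_0(x,y)$ rather than $f_0(x,x+y)=f_0(x,y)$, so you should use its transpose $\left[\begin{smallmatrix}1&1\\0&1\end{smallmatrix}\right]$ to invoke Lemma~\ref{fyx+y} as stated; your subsequent use of the swap matrix to force constancy is a harmless variant of the paper's remark following Lemma~\ref{fyx+y}.
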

\begin{proof}
For a $\mathcal G(\Z^2)$ equivariant
valuation $Z:\mathcal{P}(\Z^2)\to \Z[[x,y]]$,
let $Z(\{o\})=f_0$  and 
$Z([o,e_1])=f_1$.
According to Lemma~\ref{f0f1f2properties},
$f_0$ and $f_1$ satisfy the following properties:
\begin{eqnarray}
\label{f0GL2ZSL2Z0-PowerSeries}
f_0(x,y)&=&f_0(ax+cy,bx+dy) \mbox{ \ for any $
\left[
\begin{array}{cc}
a&b\\
c&d
\end{array}\right]\in {\rm GL}(2,\Z)$};\\
\label{f1shift-PowerSeries}
f_1(x,y)&=& \exp(x)\cdot f_1(-x,-y);\\
\label{f1111-PowerSeries}
f_1(x,y)&=& f_1(x,x+y);\\
\label{f1extra-PowerSeries}
f_1(x,y)&=& f_1(x,-y).
\end{eqnarray}
It follows from 
\eqref{f0GL2ZSL2Z0-PowerSeries} and Lemma~\ref{fyx+y} that $f_0$ is a constant power series.

For $f_1$, \eqref{f1111-PowerSeries} and Lemma~\ref{fyx+y} yield that $f_1(x,y)=\tilde{f}(x)$ for a
$\tilde{f}\in \Q[[x]]$, and by \eqref{f1shift-PowerSeries} we have
\begin{equation}
\label{f1tildef-shift-PowerSeries}
\tilde{f}(x)=\exp(x)\cdot \tilde{f}(-x).
\end{equation}
We may assume that $\tilde{f}$ is not the constant zero power series,
and let $a_dx^d$, $d\in\N$, be the term with the smallest degree with non-zero coefficient $a_d$ in $\tilde{f}(x)$. 
It follows that
$\tilde{f}(x)=a_dx^d\cdot \varphi(x)$ where
$\varphi(x)\in\Q[[x]]$ is of the form 
$\varphi(x)=1+\sum_{n=1}^\infty b_nx^n$. Substituting 
$\tilde{f}(x)=a_dx^d\cdot \varphi(x)$ into \eqref{f1tildef-shift-PowerSeries} and equating the coefficients of $x^d$ on the two sides shows that $d=2k$ for a $k\in\N$, and hence
\begin{equation}
\label{f1tildef-shift-PowerSeries-phi}
\varphi(x)=\exp(x)\cdot \varphi(-x).
\end{equation}
According to \eqref{logarithm-PowerSeries}, there exists a formal power series $\psi\in \Q[[x]]$ such that
$\exp(\psi)=\varphi$; therefore, \eqref{f1tildef-shift-PowerSeries-phi} yields that $\psi(x)=x+\psi(-x)$. We deduce the existence of an $h\in\Q[[x]]$ such that
$\psi(x)=\frac12\,x+h(x^2)$, and hence $f_1(x,y)=
g(x^2)\cdot \exp\left(\mbox{$\frac{1}2$}\,x\right)$ where $g(x)=a_{k}x^k\cdot\exp(h(x))$.

Now let us assume that 
$f_0,f_1\in\Q[[x,y]]$ satisfy that $f_0\equiv c$ for a constant $c\in\Q$ and $f_1(x,y)=
g(x^2)\cdot \exp\left(\mbox{$\frac{1}2$}\,x\right)$ for a $g\in\Q[[x]]$. In particular, $f_0$ and $f_1$ satisfy
\eqref{f0GL2ZSL2Z0-PowerSeries}, \eqref{f1shift-PowerSeries}, \eqref{f1111-PowerSeries} and \eqref{f1extra-PowerSeries}.
It follows from Proposition~\ref{lowdim} that
 there exists
a  $\mathcal{G}(\Z^2)$ equivariant valuation $Z_1:\mathcal{P}(\Z^2)\to\Q[[x,y]]$ satisfying
$Z_1(\{o\})=f_0$ and $Z_1([o,e_1])=f_1$ and
$$
Z_1(T)(x,y)=\mbox{$\frac12$}\,f_1(x,y)+\mbox{$\frac12$}\,f_1(y,-x)
+\mbox{$\frac12$}\,\exp(x)\cdot f_1(-x+y,-x).
$$
\end{proof}

Given Proposition~\ref{PowerSeriesValGL2ZZ1}, our remaining task is to understand simple $\mathcal G(\Z^2)$ equivariant
valuations; therefore, the rest of the section is dedicated to simple valuations.

 The following statement follows from Lemma~\ref{simplef2algebraic-properties}, Proposition~\ref{simplef2algebraic} and Lemma~\ref{f23uplemma}.
 
\begin{prop}
\label{simplef2algebraic-PowerSeries}
For any simple
$\mathcal G(\Z^2)$ equivariant
valuation $Z:\mathcal{P}(\Z^2)\to \Q[[x,y]]$, 
$Z(T)=f_2$ satisfies the  properties
\begin{description}
\item{(A)} $f_2(x,y)+\exp(x)\cdot f_2(y-x,y)=f_2(x,x+y)+f_2(y,x+y)$;
\item{(B)} $f_2(x,y)= f_2(y,x)$;
\item{(C)} $f_2(y-x,-x)= \exp(-x)\cdot f_2(x,y)$.
\end{description}

On the other hand, for any $f_2\in\Q[[x,y]]$ satisfying
the properties (A), (B) and (C),
there exists a unique simple
$\mathcal G(\Z^2)$ equivariant
valuation $Z:\mathcal{P}(\Z^2)\to \Q[[x,y]]$ such that
$Z(T)=f_2$.
\end{prop}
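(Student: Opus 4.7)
The proposition is essentially a repackaging of three results already in hand, so the plan is to verify that conditions (A), (B), (C) are equivalent to the triple $\{\eqref{f2simple1},\eqref{f2simple2},\eqref{f2simple3}\}$ from Lemma~\ref{simplef2algebraic-properties}, and then to invoke Proposition~\ref{simplef2algebraic} and Lemma~\ref{f23uplemma} as black boxes. No new computation should be needed.

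The plan is to first observe, by pure substitution, that (B) is literally \eqref{f2simple3}, and that (C) is \eqref{f2simple1} (written as $f_2(y-x,-x)=\exp(-x)f_2(x,y)$ instead of $f_2(-x+y,-x)=\exp(-x)f_2(x,y)$). Likewise (A) is the identity \eqref{f23up} appearing in Lemma~\ref{f23uplemma}. With these identifications recorded, no further manipulation is required to align the two formulations.

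For the forward direction, I would start from a simple $\mathcal G(\Z^2)$ equivariant valuation $Z$ and set $f_2=Z(T)$. Lemma~\ref{simplef2algebraic-properties} gives \eqref{f2simple1}, \eqref{f2simple2} and \eqref{f2simple3}, so (B) and (C) are immediate. Since (C)$=$\eqref{f2simple1} and (B)$=$\eqref{f2simple3} hold, Lemma~\ref{f23uplemma} applies and shows that \eqref{f2simple2} is equivalent to \eqref{f23up}$=$(A), so (A) follows.

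For the converse, assume $f_2\in\Q[[x,y]]$ satisfies (A), (B), (C). Then \eqref{f2simple1} and \eqref{f2simple3} are in force, so Lemma~\ref{f23uplemma} again applies and turns (A)$=$\eqref{f23up} into \eqref{f2simple2}. Hence $f_2$ satisfies all three hypotheses of Proposition~\ref{simplef2algebraic}, which yields a unique simple $\mathcal G(\Z^2)$ equivariant valuation $Z\colon\mathcal P(\Z^2)\to\Q[[x,y]]$ with $Z(T)=f_2$, completing the proof. The only ``hard part'' is in fact already absorbed by Lemma~\ref{f23uplemma}; here one is merely checking that the reformulation (A)--(C) is the right one, so this should occupy only a few lines.
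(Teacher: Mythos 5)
Your proposal is correct and matches the paper exactly: the paper simply states that the proposition ``follows from Lemma~\ref{simplef2algebraic-properties}, Proposition~\ref{simplef2algebraic} and Lemma~\ref{f23uplemma},'' and your identifications (B)$=$\eqref{f2simple3}, (C)$=$\eqref{f2simple1}, (A)$=$\eqref{f23up}, together with the use of Lemma~\ref{f23uplemma} to swap \eqref{f2simple2} for \eqref{f23up}, are precisely the intended argument.
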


In the formulas below, the expression
$\frac{e^t-1}{t}$ stands for $\sum_{n=0}^\infty\frac{t^n}{(n+1)!}\in\Q[[t]]$, and its reciprocal $\frac{t}{e^t-1}$
stands for
$\sum_{n=0}^\infty\frac{B_n}{n!}\cdot t^n\in\Q[[x]]$
where $B_0, B_1,\ldots$
are the Bernoulli numbers  (cf. Zagier \cite{Zag08}).
In particular, we can also consider
$\frac{e^y-e^x}{y-x}=\exp(x)\cdot \frac{e^{y-x}-1}{y-x}\in \Q[[x,y]]$.
Tacitly, we also use the property that $\Q[[x,y]]$ is an integral domain; namely, the product of non-zero elements is non-zero
(cf. Sambale \cite{Sam23}).

The main idea to better understand simple power series valued valuations is to transform $f_2$ into a power series
$\varrho$ that satisfies some relations (see (A') and (E) in Theorem~\ref{ABC-AprimeE-PowerSeries})
  that do not contain exponential components and are simpler than
the conditions (A), (B) and (C) for $f_2$ in Proposition~\ref{simplef2algebraic-PowerSeries}.
In particular, for $f\in\Q[[x,y]]$, we consider the formal power series
\begin{equation}
\label{rhotildef-PowerSeries}
f^\sharp(x,y)=
\frac{x}{e^x-1}\cdot
\frac{x+y}{e^{x+y}-1}\cdot \Big[f(x,x+y)+\exp(x)\cdot f(y,x+y)\Big].
\end{equation}
As a reverse notion, if $\varrho\in\Q[[x,y]]$ and 
$\frac{e^{y}-e^x}{y-x}\cdot \varrho(y-x,x)-
 \frac{e^{x}-1}{x}\cdot \varrho(x,y-x)$ is of the form $y\cdot g(x,y)$ for a 
 $g\in\Q[[x,y]]$, then
we consider
\begin{equation}
\label{tildefrho-PowerSeries}
\varrho^\dag(x,y)=
\frac{1}y\cdot\left[ \frac{e^{y}-e^x}{y-x}\cdot \varrho(y-x,x)-
 \frac{e^{x}-1}{x}\cdot \varrho(x,y-x)\right].
\end{equation}
We observe that $\varrho^\dag(x,y)$ can't be defined, for example, for $\varrho(x,y)=x$; however,
 $\varrho^\dag$ is a formal power series if either
 $\varrho=f^\sharp$ for an $f\in\Q[[x,y]]$ (cf. Lemma~\ref{f-AAprime-PowerSeries}), or if $\varrho$ satisfies (cf. Lemma~\ref{even-degree-terms})
\begin{description}
\item{(D)} $\varrho(-x,-y)=\varrho(x,y)$
\end{description}
where (D) is equivalent to saying that each term in $\varrho$ with non-zero coefficient has even degree. 
We note that the operators
$f\mapsto f^\sharp$ and
$\varrho\mapsto \varrho^\dag$ are essentially inverses of each other
 (cf. Lemma~\ref{f-AAprime-PowerSeries}).

 The first major step of our argument towards understanding simple valuations is the following statement:

 \begin{theo}
\label{ABC-AprimeE-PowerSeries}
If $f\in\Q[[x,y]]$ satisfies
(A), (B) and (C) in Proposition~\ref{simplef2algebraic-PowerSeries}, then
$\varrho=f^\sharp$ satisfies $\varrho^\dag=f$ and
\begin{description}
\item{(A')} $(x+y) \varrho(x,y-x)=y\varrho(x,y)+x \varrho(y,x)$ and
\item{(E)} $\varrho(x,-2x-y)=\varrho(x,y)$.
\end{description}

On the other hand, if 
$\varrho\in\Q[[x,y]]$ satisfies conditions (A') and (E), then $\varrho$ satisfies (D), and hence
$\varrho^\dag\in\Q[[x,y]]$, and $f=\varrho^\dag$ satisfies
(A), (B) and (C).
\end{theo}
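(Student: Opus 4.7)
The key first step is to verify by direct substitution that $(f^\sharp)^\dag = f$ for every $f \in \Q[[x,y]]$, and that $(\varrho^\dag)^\sharp = \varrho$ whenever $\varrho^\dag \in \Q[[x,y]]$. Writing $\phi(t) = (e^t - 1)/t$, both computations hinge on the factorization $(e^y - e^x)/(y - x) = e^x \phi(y-x)$ and the telescoping identity $e^y - 1 = e^x(e^{y-x} - 1) + (e^x - 1)$, after which the ``cross'' terms cancel and the ``diagonal'' terms combine to produce the desired equality.

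\textbf{Forward direction.} Assume $f$ satisfies (A), (B), (C) and set $\varrho = f^\sharp$; then $\varrho^\dag = f$ by the inverse identity. To verify (A'), I would expand both $(x+y)\varrho(x, y-x)$ and $y\varrho(x,y) + x\varrho(y,x)$ via the formula for $f^\sharp$. With $A = f(x, x+y)$ and $B = f(y, x+y)$, the algebraic identity
\[ (A + e^x B)(e^y-1) + (B + e^y A)(e^x-1) = (e^{x+y}-1)(A+B) \]
shows that, modulo a non-zero-divisor prefactor, (A') is \emph{equivalent} to the statement $f(x,y) + e^x f(y-x, y) = A + B$, which is exactly (A). For (E), I would expand $\varrho(x, -2x-y)$ using $\phi(-t) = e^{-t}\phi(t)$; rewrite $f(x, -x-y) = e^{-x-y}f(x+y, 2x+y)$, $e^x f(-2x-y, -x-y) = e^{-y}f(x+y, -x)$, and $e^x f(x+y, -x) = f(x, 2x+y)$ via three applications of (C); and finish by applying (A) under the substitution $y \mapsto x+y$ to match $\varrho(x,y)$.

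\textbf{Reverse direction.} Assume $\varrho$ satisfies (A') and (E). Symmetrizing (A') under $x \leftrightarrow y$ and dividing by the non-zero-divisor $x+y$ yields $\varrho(x, y-x) = \varrho(y, x-y)$, which rewrites as $\varrho(a, b) = \varrho(a+b, -b)$; call this~($*$). The linear transformations $g_1\colon(x,y)\mapsto(x,-2x-y)$ (from (E)) and $g_2\colon(x,y)\mapsto(x+y,-y)$ (from ($*$)) generate a copy of $D_4$ inside $\mathrm{GL}(2,\Z)$, and one checks that $(g_1 g_2)^2 = -\mathrm{id}$, so $\varrho$ satisfies (D). Because $e^x\phi(-x) = \phi(x)$ and $\varrho(-x, x) = \varrho(x, -x)$ by (D), the bracket in $\varrho^\dag(x,y)$ vanishes at $y = 0$, hence $\varrho^\dag \in \Q[[x,y]]$. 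To derive (B) and (C) for $f = \varrho^\dag$, I would apply (A') with $(a,b) = (y-x, x)$, use (E) to replace $\varrho(y-x, 2x-y)$ by $\varrho(y-x, -y)$, and solve for $x\varrho(y-x, x)$; substituting into the formula for $f$ and using the cancellation $(y-x)e^x\phi(y-x) + x\phi(x) = e^y - 1 = y\phi(y)$ produces the cleaner form
\[ x f(x,y) = e^x\phi(y-x)\varrho(y-x, -y) - \phi(y)\varrho(x, y-x). \]
Computing $xf(y, x)$ analogously (using $\phi(x-y) = e^{x-y}\phi(y-x)$) and applying (D) together with ($*$) to equate $\varrho(x-y, y) = \varrho(y-x, -y)$ and $\varrho(y, x-y) = \varrho(x, y-x)$ establishes (B). A parallel calculation of $e^x f(y-x, -x)$ produces exactly the same right-hand side, giving (C). Finally, (A) for $f$ is automatic from (A') for $f^\sharp = \varrho$ via the equivalence already established in the forward direction.

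The main obstacle I expect is discovering and verifying the alternative form of $xf(x,y)$ displayed above: it is the pivot that compresses (A') and (E) into a shape where the $D_4$-invariance of $\varrho$ (encoded by (D) and ($*$)) directly drives (B) and (C), and everything else reduces to the clean algebraic identity in Step~2.
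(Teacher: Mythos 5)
Your proposal is correct; I checked the computations you describe (the three applications of (C) plus (A) at $y\mapsto x+y$ for (E), and the derivation of $xf(x,y)=e^x\tfrac{e^{y-x}-1}{y-x}\varrho(y-x,-y)-\tfrac{e^y-1}{y}\varrho(x,y-x)$ from (A') applied at $(y-x,x)$ together with (E)), and they all go through. The overall architecture matches the paper's: the inverse identities are Lemma~\ref{f-AAprime-PowerSeries}(i),(ii); your prefactor identity $(A+e^xB)(e^y-1)+(B+e^yA)(e^x-1)=(e^{x+y}-1)(A+B)$ is exactly the computation in Lemma~\ref{f-AAprime-PowerSeries}(iii); your derivation of (D) by composing the substitutions $(x,y)\mapsto(x+y,-y)$ and $(x,y)\mapsto(x,-2x-y)$ is the paper's Lemma~\ref{ABC-ABCprime-PowerSeries}; and the divisibility argument for $\varrho^\dag$ is Lemma~\ref{even-degree-terms}. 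Where you genuinely diverge is in avoiding the auxiliary conditions (B') and (C') altogether: the paper proves (A')${}+{}$(E) $\Leftrightarrow$ (A')${}+{}$(B')${}+{}$(C') and then matches (B)$\leftrightarrow$(B') and (C)$\leftrightarrow$(C') via the intermediate form $\varrho^\Diamond(x,y)=\tfrac{1}{x-y}\bigl[\tfrac{e^x-1}{x}\varrho(x,-y)-\tfrac{e^y-1}{y}\varrho(y,-x)\bigr]$, whereas you obtain (E) in the forward direction directly from (A) and (C), and in the reverse direction you pivot on a different (but equally valid) alternative expression for $f$ and read off (B) and (C) from (D) and the symmetry $\varrho(x,y-x)=\varrho(y,x-y)$. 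Your route is somewhat leaner (one fewer layer of equivalent condition systems); the paper's (B')/(C') detour buys a cleaner bookkeeping of which condition on $f$ corresponds to which condition on $\varrho$. The only caveat is that your write-up is a plan rather than a worked proof — the displayed identities and the chains of substitutions would all need to be written out — but every step you outline is sound.
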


A good part of Theorem~\ref{ABC-AprimeE-PowerSeries} is proved directly  in 
Lemma~\ref{f-AAprime-PowerSeries}.

\begin{lemma}
\label{f-AAprime-PowerSeries}
Let $f\in\Q[[x,y]]$.
\begin{description}
\item{(i)} 
 $\varrho^\dag=f$ for $\varrho=f^\sharp$.

\item{(ii)} If $\varrho^\dag=f$ for $\varrho\in\Q[[x,y]]$, then
  $\varrho=f^\sharp$.

\item{(iii)} 
If $f\in\Q[[x,y]]$ satisfies 
(A) in Proposition~\ref{simplef2algebraic-PowerSeries}, then 
$\varrho=f^\sharp$ satisfies
(A') in Theorem~\ref{ABC-AprimeE-PowerSeries}.

On the other hand, 
if $f=\varrho^\dag$ for a $\varrho\in\Q[[x,y]]$ that satisfies (A'), then
$f$ satisfies (A).
\end{description}
\end{lemma}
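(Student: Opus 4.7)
The plan is to prove all three parts by direct calculation with formal power series, exploiting that $e^t-1 = t(1 + t/2! + \cdots)$ has an invertible non-$t$ factor, so $\frac{t}{e^t-1}$ and $\frac{e^t-1}{t}$ are mutually inverse units in $\Q[[t]]$ (and analogously for the variables $x$, $y$, $x+y$, $y-x$ in $\Q[[x,y]]$).

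For (i), I substitute $\varrho=f^\sharp$ into \eqref{tildefrho-PowerSeries}. Writing $\varrho(y-x,x)$ and $\varrho(x,y-x)$ via \eqref{rhotildef-PowerSeries} and multiplying by the weights $\frac{e^y-e^x}{y-x}= e^x\cdot\frac{e^{y-x}-1}{y-x}$ and $\frac{e^x-1}{x}$ appearing in $\varrho^\dag$, the prefactors $\frac{y-x}{e^{y-x}-1}$ and $\frac{x}{e^x-1}$ collapse. A common factor $\frac{y}{e^y-1}$ survives, and the bracketed expressions combine as $e^x[f(y-x,y)+e^{y-x}f(x,y)] - [f(x,y)+e^x f(y-x,y)] = (e^y-1)f(x,y)$, so the bracket in \eqref{tildefrho-PowerSeries} equals $y f(x,y)$ and $\varrho^\dag(x,y)=f(x,y)$.

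For (ii), I start from $y f(x,y)=\frac{e^y-e^x}{y-x}\varrho(y-x,x)-\frac{e^x-1}{x}\varrho(x,y-x)$. The substitution $y\mapsto x+y$ yields one linear relation in $\varrho(y,x)$ and $\varrho(x,y)$; swapping $x\leftrightarrow y$ in that relation produces a second one. The combination $(\text{first}) + e^x\cdot(\text{second})$ eliminates $\varrho(y,x)$ and gives
\[
(x+y)[f(x,x+y)+e^x f(y,x+y)] = \frac{(e^x-1)(e^{x+y}-1)}{x}\cdot\varrho(x,y),
\]
and multiplying by the unit $\frac{x(x+y)}{(e^x-1)(e^{x+y}-1)}$ recovers $\varrho = f^\sharp$.

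For (iii), take $\varrho=f^\sharp$ and expand both sides of (A'). Every term carries a common factor $\frac{xy(x+y)}{(e^x-1)(e^y-1)(e^{x+y}-1)}$; after clearing it, (A') becomes
\[
(e^{x+y}-1)[f(x,y)+e^x f(y-x,y)] = (e^y-1)[f(x,x+y)+e^x f(y,x+y)] + (e^x-1)[f(y,x+y)+e^y f(x,x+y)].
\]
On the right hand side the coefficients of $f(x,x+y)$ and $f(y,x+y)$ each simplify to $e^{x+y}-1$, so cancelling this common factor in the integral domain $\Q[[x,y]]$ recovers exactly (A). The converse direction follows from (ii): if $f=\varrho^\dag$ and $\varrho$ satisfies (A'), then $\varrho=f^\sharp$, and the equivalence just shown supplies (A) for $f$. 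The main concern is not conceptual but bookkeeping: one must verify at each step that every formal quotient is a legitimate element of $\Q[[x,y]]$ (immediate once the lowest-degree factor is pulled out of each denominator) and keep track of the unit factors $\frac{t}{e^t-1}$ cancelling across the calculation.
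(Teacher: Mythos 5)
Your proposal is correct and follows essentially the same route as the paper: direct substitution into the definitions of $f^\sharp$ and $\varrho^\dag$, cancellation of the unit factors $\tfrac{t}{e^t-1}$, and for (iii) the observation that after clearing the common unit the right-hand side of (A$'$) collapses to $(e^{x+y}-1)[f(x,x+y)+f(y,x+y)]$, so (A$'$) and (A) are equivalent by cancelling $e^{x+y}-1$ in the integral domain $\Q[[x,y]]$. Your derivation of (ii) by eliminating $\varrho(y,x)$ from two substituted instances is just a mild rephrasing of the paper's substitution of $f=\varrho^\dag$ into \eqref{rhotildef-PowerSeries}, and all the identities you state check out.
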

\begin{proof}
For (i), we observe that
if $\varrho=f^\sharp$, then
$$
\frac{e^{y}-e^x}{y-x}\cdot \varrho(y-x,x)-
 \frac{e^{x}-1}{x}\cdot \varrho(x,y-x)=y\,f(x,y)
$$
follows from substituting the formula for
$\varrho$ in terms of $f$ coming from
\eqref{rhotildef-PowerSeries} into the left hand side and using the basic rules for the exponential power series.

For (ii), if $f=\varrho^\dag\in\Q[[x]]$, then the argument is similar,  
only we substitute the formula for $f$ in terms of $\varrho$
\eqref{tildefrho-PowerSeries} into the right hand side of \eqref{rhotildef-PowerSeries}.

To verify (iii), we may assume that $\varrho=f^\sharp$ by (i) and (ii), and then
using \eqref{rhotildef-PowerSeries}, we deduce that 
\begin{align*}
&(x+y) \varrho(x,y-x)-y\varrho(x,y)-x \varrho(y,x)\\
=&\frac{xy(x+y)}{(e^x-1)(e^y-1)}\cdot\left[f(x,y)+\exp(x)\cdot f(y-x,y)\right]\\
&-\frac{xy(x+y)}{(e^x-1)(e^{x+y}-1)}\cdot\left[f(x,x+y)+\exp(x)\cdot f(y,x+y)\right]\\
&-\frac{xy(x+y)}{(e^y-1)(e^{x+y}-1)}\cdot\left[f(y,x+y)+\exp(y)\cdot f(x,x+y)\right]\\
=&\frac{x}{e^x-1}\cdot\frac{y}{e^y-1}\cdot (x+y)\times\\
&\times\left[f(x,y)+\exp(x)\cdot f(y-x,y)-f(x,x+y)-f(y,x+y)\right]
\end{align*}
where the elements of the field of fractions of $\Q[[x,y]]$ appearing in the above formulae 
belong actually to $\Q[[x,y]]$.  
\end{proof}

Given Lemma~\ref{f-AAprime-PowerSeries}, we may assume that $\varrho\in \Q[[x,y]]$ satisfies (A').

\begin{lemma}
\label{lemma:rho(x,y-x)}
If (A') holds for $\varrho\in \Q[[x,y]]$, then  
\begin{align}
\rho(x,y-x)&=\rho(y,x-y) \label{eq:rho(x,y-x)}
\\ 
\rho(y,-x)&=\rho(y-x,x)  \label{eq:rho(y,-x)}
\\ 
\rho(x,y)&=\rho(x+y,-y) \label{eq:rho(x,y)=rho(x+y,-y)}
\end{align} 
\end{lemma}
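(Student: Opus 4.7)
The whole statement should follow directly from (A') and elementary substitutions in $\Q[[x,y]]$. The main observation is that the right-hand side $y\varrho(x,y)+x\varrho(y,x)$ of (A') is invariant under swapping $x$ and $y$, which immediately yields \eqref{eq:rho(x,y-x)}. The other two identities follow by renaming variables.

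The plan is as follows. First I would swap $x$ and $y$ in (A'), obtaining
\[
(x+y)\,\varrho(y,x-y)=x\varrho(y,x)+y\varrho(x,y),
\]
whose right-hand side coincides with that of (A'). Thus
\[
(x+y)\,\varrho(x,y-x)=(x+y)\,\varrho(y,x-y),
\]
and since $\Q[[x,y]]$ is an integral domain and $x+y\neq 0$, we may cancel $x+y$ to conclude \eqref{eq:rho(x,y-x)}.

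Next, I would derive \eqref{eq:rho(x,y)=rho(x+y,-y)} by a change of variables in \eqref{eq:rho(x,y-x)}: setting $u:=x$ and $w:=y-x$, so that $y=u+w$ and $x-y=-w$, \eqref{eq:rho(x,y-x)} reads $\varrho(u,w)=\varrho(u+w,-w)$, which is exactly \eqref{eq:rho(x,y)=rho(x+y,-y)} after renaming. Finally, applying \eqref{eq:rho(x,y)=rho(x+y,-y)} with the substitution $(x,y)\mapsto (y-x,x)$ gives $\varrho(y-x,x)=\varrho(y,-x)$, which is \eqref{eq:rho(y,-x)}.

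There is no genuine obstacle here: the only algebraic input beyond relabeling is cancellation of $x+y$, which is legitimate because $\Q[[x,y]]$ is an integral domain (noted earlier in the paper). All three identities therefore reduce to the symmetry of the right-hand side of (A') under the swap $x\leftrightarrow y$.
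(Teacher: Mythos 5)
Your proof is correct and follows essentially the same route as the paper: both exploit the symmetry of the right-hand side of (A') under $x\leftrightarrow y$, cancel the factor $x+y$ using that $\Q[[x,y]]$ is an integral domain, and obtain the remaining two identities by invertible linear substitutions into \eqref{eq:rho(x,y-x)}. The only cosmetic difference is that you derive \eqref{eq:rho(y,-x)} from \eqref{eq:rho(x,y)=rho(x+y,-y)} rather than directly from \eqref{eq:rho(x,y-x)}, which is immaterial.
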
 

\begin{proof} The right hand side of (A') is symmetric in $x$ and $y$. Therefore if 
(A') holds for $\varrho$, then $(x+y)\rho(x,y-x)$ is symmetric in $x$ and $y$, 
implying in turn that $\rho(x,y-x)$ is symmetric in $x$ and $y$. 
That is, we have  \eqref{eq:rho(x,y-x)}.

The equality \eqref{eq:rho(y,-x)} 
is obtained by making the linear substitution  
$x\mapsto y$, $y\mapsto y-x$ in \eqref{eq:rho(x,y-x)}, 
whereas \eqref{eq:rho(x,y)=rho(x+y,-y)} is obtained by making the substitution 
$x\mapsto x$, $y\mapsto x+y$ in \eqref{eq:rho(x,y-x)}. 
\end{proof} 

Now we show that condition 
(D) for a $\varrho\in\Q[[x,y]]$
ensures that
$\varrho^\dag \in\Q[[x,y]]$.

\begin{lemma}
\label{even-degree-terms}
For $\varrho\in\Q[[x,y]]$,
$\varrho^\dag$ is a formal power series if $\varrho\in\Q[[x,y]]$ satisfies 
$\varrho(-x,-y)=\varrho(x,y)$ (condition (D) preceding Theorem~\ref{ABC-AprimeE-PowerSeries}). 
\end{lemma}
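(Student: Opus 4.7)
The plan is to exploit the elementary fact that an element of $\Q[[x,y]]$ is divisible by $y$ if and only if it vanishes upon the substitution $y=0$ (since writing a power series as $\sum_{n\geq 0} a_n(x)y^n$ with $a_n\in\Q[[x]]$, divisibility by $y$ amounts to $a_0=0$). Writing
\[
\varrho^\dag(x,y)=\frac{1}{y}\cdot H(x,y), \qquad H(x,y):=\frac{e^y-e^x}{y-x}\cdot \varrho(y-x,x)-\frac{e^x-1}{x}\cdot\varrho(x,y-x),
\]
it therefore suffices to check that $H(x,0)=0$ in $\Q[[x]]$. Since $\frac{e^y-e^x}{y-x}=\exp(x)\cdot\frac{e^{y-x}-1}{y-x}$ lies in $\Q[[x,y]]$ and $\frac{e^x-1}{x}\in\Q[[x]]$, the quantity $H$ is indeed a well-defined power series, so this reduction is legitimate.

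Next I would carry out the substitution $y=0$. The first factor of the first summand becomes $\frac{1-e^x}{-x}=\frac{e^x-1}{x}$, and $\varrho(y-x,x)$ becomes $\varrho(-x,x)$, while the second summand specializes to $\frac{e^x-1}{x}\cdot\varrho(x,-x)$. Therefore
\[
H(x,0)=\frac{e^x-1}{x}\bigl[\varrho(-x,x)-\varrho(x,-x)\bigr].
\]

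Finally, condition (D) says $\varrho(-u,-v)=\varrho(u,v)$; applying this with $(u,v)=(x,-x)$ gives $\varrho(-x,x)=\varrho(x,-x)$, so the bracket collapses and $H(x,0)=0$. I would then conclude that $y\mid H$ in $\Q[[x,y]]$, so that $\varrho^\dag\in\Q[[x,y]]$.

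No step here is really an obstacle — the only thing to watch is that we are working in the power series ring rather than in a ring of rational functions, so one must verify at the start that each factor appearing in $H$ is genuinely a formal power series (which is immediate for $\frac{e^t-1}{t}$, and hence for $\frac{e^y-e^x}{y-x}$ via the identity $e^y-e^x=e^x(e^{y-x}-1)$). Once the expression is known to lie in $\Q[[x,y]]$, the divisibility argument via specialization at $y=0$ finishes the proof in one line using (D).
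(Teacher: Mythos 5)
Your proof is correct and follows essentially the same route as the paper: both arguments reduce the claim to showing that the numerator $H(x,y)$ vanishes at $y=0$, evaluate $H(x,0)=\frac{e^x-1}{x}\bigl[\varrho(-x,x)-\varrho(x,-x)\bigr]$, and kill the bracket using condition (D). Your direct application of (D) at $(x,-x)$ is marginally more streamlined than the paper's detour through the even-degree description of (D), but the substance is identical.
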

\begin{proof} 
Assume that 
the power series $\varrho(x,y)=\sum_{p,q\geq 0}a_{p,q}x^py^q$ satisfies condition (D). 
Then $p+q$ is even for each term $a_{p,q}x^py^q$ with $a_{p,q}\neq 0$. 
In particular, we have 
\begin{equation*}
\label{rhominusxx}
\varrho(-x,x)-\varrho(x,-x)=0.
\end{equation*}
It follows that $h(x,y)=\frac{e^{y}-e^x}{y-x}\cdot \varrho(y-x,x)-
 \frac{e^{x}-1}{x}\cdot \varrho(x,y-x)$ 
satisfies 
\[h(x,0)=\frac{e^{x}-1}{x}\cdot (\varrho(-x,x)-\varrho(x,-x))=0.\]  
Therefore  
$h(x,y)=y\cdot g(x,y)$ for a formal power series 
$g(x,y)\in \Q[[x,y]]$, showing 
that $\varrho^\dag(x,y)=\frac1y\cdot h(x,y)\in\Q[[x,y]]$.
\end{proof}

\begin{lemma}
\label{ABC-ABCprime-PowerSeries}
For $\varrho\in\Q[[x,y]]$,
$\varrho$ satisfies the conditions
(A') and (E) in  Theorem~\ref{ABC-AprimeE-PowerSeries} if and only if it satisfies the conditions
(A'), (B') and (C') where
\begin{description}
\item{(B')} $(x-y)\varrho(x,y-x)=x\varrho(y,-x)-y\varrho(x,-y)$ and
\item{(C')} $(x-y)\varrho(-x,x-y)=x\varrho(y,-x)-y\varrho(x,-y)$.
\end{description}

In addition, the
conditions (B') and (C') for $\varrho$ yield (D) in Lemma~\ref{even-degree-terms}.
\end{lemma}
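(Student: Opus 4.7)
I would prove the two implications of the ``if and only if'' separately and obtain the addendum about $(\mathrm{D})$ as a byproduct of the $\Leftarrow$ direction.

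For $(\mathrm{A}')+(\mathrm{E})\Rightarrow(\mathrm{B}')+(\mathrm{C}')$, my strategy is first to extract $(\mathrm{D})$ from the hypotheses, and then to deduce $(\mathrm{B}')$ and $(\mathrm{C}')$ by elementary substitutions in $(\mathrm{A}')$. To produce $(\mathrm{D})$ I would interleave $(\mathrm{E})$ with the identity $\varrho(a,b)=\varrho(a+b,-b)$ (equation \eqref{eq:rho(x,y)=rho(x+y,-y)} of Lemma~\ref{lemma:rho(x,y-x)}): starting from $\varrho(x,y)=\varrho(x,-2x-y)$ via $(\mathrm{E})$, applying $\varrho(a,b)=\varrho(a+b,-b)$ to the right-hand side gives $\varrho(x,y)=\varrho(-x-y,2x+y)$, and a second application of $(\mathrm{E})$ collapses the second argument to $\varrho(x,y)=\varrho(-x-y,y)$. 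Feeding this new identity $\varrho(a,b)=\varrho(-a-b,b)$ into the right-hand side of $\varrho(x,y)=\varrho(x+y,-y)$ then yields $\varrho(x,y)=\varrho(-x,-y)$, which is $(\mathrm{D})$. With $(\mathrm{D})$ in hand, $(\mathrm{B}')$ follows from the substitution $y\mapsto -y$ in $(\mathrm{A}')$ once I use $(\mathrm{E})$ to rewrite $\varrho(x,-y-x)$ as $\varrho(x,y-x)$ and $(\mathrm{D})$ to rewrite $\varrho(-y,x)$ as $\varrho(y,-x)$. Symmetrically, the substitution $x\mapsto -x$ in $(\mathrm{A}')$, combined with $(\mathrm{E})$ applied to $\varrho(-x,x+y)=\varrho(-x,x-y)$ and $(\mathrm{D})$ applied to $\varrho(-x,y)=\varrho(x,-y)$, delivers $(\mathrm{C}')$.

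For the converse, subtracting $(\mathrm{C}')$ from $(\mathrm{B}')$ cancels the right-hand sides and leaves $(x-y)[\varrho(x,y-x)-\varrho(-x,x-y)]=0$; since $\Q[[x,y]]$ is an integral domain, I can cancel the factor $x-y$ and substitute $y\mapsto x+y$ to recover $(\mathrm{D})$, which simultaneously establishes the addendum. To derive $(\mathrm{E})$ itself, I would substitute $y\mapsto -y$ in $(\mathrm{B}')$ and use $(\mathrm{D})$ to replace $\varrho(-y,-x)$ by $\varrho(y,x)$, obtaining $(x+y)\varrho(x,-y-x)=x\varrho(y,x)+y\varrho(x,y)$. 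Its right-hand side agrees with that of $(\mathrm{A}')$, so cancelling the common factor $x+y$ gives $\varrho(x,-y-x)=\varrho(x,y-x)$, and the linear change $y\mapsto x+y$ turns this into $(\mathrm{E})$.

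The main obstacle I anticipate is the forward direction: neither $(\mathrm{B}')$ nor $(\mathrm{C}')$ is accessible from $(\mathrm{A}')$ and $(\mathrm{E})$ by a single substitution, and one must first discover the ``hidden'' identity $(\mathrm{D})$ via the two-step interleaving of $(\mathrm{E})$ with Lemma~\ref{lemma:rho(x,y-x)} described above. Everything else -- the backward direction, the addendum, and the final passage from $(\mathrm{B}')$ to $(\mathrm{B}')$ or $(\mathrm{C}')$ -- reduces to routine cancellations in the integral domain $\Q[[x,y]]$.
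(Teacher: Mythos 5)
Your proposal is correct and follows essentially the same route as the paper: both directions hinge on first extracting (D) and the auxiliary identity $\varrho(x,y-x)=\varrho(x,-y-x)$, then finishing with the substitutions $y\mapsto-y$ in (A') resp.\ (B') and cancellation in the integral domain $\Q[[x,y]]$. The only cosmetic differences are that you obtain (D) by interleaving (E) with $\varrho(x,y)=\varrho(x+y,-y)$ rather than composing the combined substitution with itself, and you derive (C') directly from (A') via $x\mapsto-x$ instead of from (B') and (D); both variants are valid.
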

\begin{proof}
Let us quickly show that the conditions (B') and (C') for $\varrho\in\Q[[x,y]]$ yield (D). Since the right hand sides of (B') and (C') coincide, we have
$\varrho(x,y-x)=\varrho(-x,x-y)$. Thus the invertible change of variable $(x,y)\mapsto (x,x+y)$ implies  that $\varrho(x,y)=\varrho(-x,-y)$. 

First, let $\varrho\in\Q[[x,y]]$
satisfy (A'), (B') and (C'), and hence also (D). 
Substituting $y$ by $-y$ in (B') leads to 
$$
(x+y)\cdot\varrho(x,-y-x)=x\cdot\varrho(-y,-x)+y\cdot \varrho(x,y).
$$
The right hand side above equals the right hand side of (A') by (D), hence 
$(x+y)\cdot\varrho(x,-y-x)=(x+y)\cdot\varrho(x,y-x)$ by (A'), which in turn implies that 
\begin{equation}
\label{eq:D'} 
\varrho(x,y-x)=\varrho(x,-y-x). 
\end{equation} 
Substituting $y$ by $x+y$ in \eqref{eq:D'}, we obtain (E).  

We assume now that (A') and (E) hold for $\varrho$. 
Using the substitution $y\mapsto y-x$ in (E) shows that
\eqref{eq:D'} holds, as well.
It follows from Lemma~\ref{lemma:rho(x,y-x)} that $\varrho$ also satisfies
\eqref{eq:rho(x,y)=rho(x+y,-y)}, and so we have
both 
$$
\varrho(x,y)=\varrho(x+y,-y) \mbox{ and } \varrho(x,y)=\varrho(x,-2x-y).
$$
Composing the above two linear substitutions, we get  
$$
\varrho(x,y)=\varrho(x+y,-2(x+y)-(-y))=\varrho(x+y,-2x-y).
$$
In addition, composing the linear substitution $x\mapsto x+y$, $y\mapsto -2x-y$ with itself, we obtain that 
$$
\varrho(x,y)=\varrho((x+y)+(-2x-y),-2(x+y)-(-2x-y))=\varrho(-x,-y),
$$
and so (D) holds. 
Now make the substitution $y\mapsto -y$ in (A') to get 
\[(x-y)\rho(x,-y-x)=x\rho(-y,x)-y\rho(x,-y).\]
By (D), we can replace $\rho(-y,x)$ by $\rho(y,-x)$ on the right hand side of the above equality, and we can replace $\rho(x,-y-x)$ by $\rho(x,y-x)$ on the left hand side of the above equality by \eqref{eq:D'}. This way we obtain (B'). 

Finally, the change of variable $y\mapsto y-x$ in (D) yields that $\varrho(x,y-x)=\varrho(-x,x-y)$, and hence (B') and (D) imply (C'), 
completing the proof of Lemma~\ref{ABC-ABCprime-PowerSeries}.
\end{proof}

\begin{proof}[Proof of Theorem~\ref{ABC-AprimeE-PowerSeries}]
According to Lemma~\ref{f-AAprime-PowerSeries}, 
Lemma~\ref{even-degree-terms} and Lemma~\ref{ABC-ABCprime-PowerSeries}, it is equivalent to prove that
$f\in\Q[[x,y]]$ satisfies (A),
(B) and (C) in Proposition~\ref{simplef2algebraic-PowerSeries} if and only if
$\varrho=f^\sharp \in \Q[[x,y]]$ satisfies (A'), (B') and (C') in Lemma~\ref{ABC-ABCprime-PowerSeries}.

According to
Lemma~\ref{f-AAprime-PowerSeries} and
Lemma~\ref{even-degree-terms},
we may assume that
$f\in\Q[[x,y]]$ satisfies (A),
$\varrho=f^\sharp$ satisfies (A') and $f=\varrho^\dag$. We deduce from Lemma~\ref{lemma:rho(x,y-x)} that
$\varrho$ satisfies
\eqref{eq:rho(x,y-x)} and
\eqref{eq:rho(y,-x)}, as well.

Under these conditions, the condition (B) for $f$ is equivalent to (B') for $\varrho$, as using \eqref{eq:rho(x,y-x)} and \eqref{eq:rho(y,-x)}, we have
\begin{align*} 
&f(x,y)-f(y,x)=\varrho^\dag(x,y)-\varrho^\dag(y,x)=\\
=&\frac{1}{xy(y-x)}\cdot
\Big[(e^y-e^x)x\varrho(y-x,x)
+(e^x-1)(x-y)\varrho(x,y-x)
\\ 
&+(e^x-e^y)y\rho(x-y,y)+(e^y-1)(y-x)\varrho(y,x-y)\Big]\\ 
=&\frac{e^x-e^y}{xy(y-x)}\cdot \Big[(x-y)\varrho(x,y-x)-x\varrho(y-x,x)+y\varrho(x-y,y)\Big]\\ =&\frac{e^x-e^y}{xy(y-x)}\cdot \Big[(x-y)\varrho(x,y-x)-x\varrho(y,-x)+y\varrho(x,-y)\Big], 
\end{align*}
where the calculations show that the power series in brackets always have the required divisibility properties. Therefore, we may also assume that
$f$ satisfies (B) and $\varrho$ satisfies (B').  Under these additional conditions, we verify that $\varrho^\dag=\varrho^\Diamond$ where 
$$
\varrho^\Diamond(x,y)=\frac{1}{x-y}\cdot \left[\frac{e^x-1}{x}\cdot\varrho (x,-y)-\frac{e^y-1}{y}\cdot\varrho(y,-x)\right]
$$
where the power series in the brackets is anti-symmetric, and hence divisible by $y-x$.

Now as $\varrho$ satisfies (B') and \eqref{eq:rho(y,-x)}, the formula $\varrho^\dag=\varrho^\Diamond$ follows from
\begin{align*}
\varrho^\dag-\varrho^\Diamond=& 
\frac{e^y-e^x}{y(y-x)}\cdot\varrho(y-x,x)-\frac{e^x-1}{xy}\cdot\varrho(x,y-x)-\\ 
&-\frac{1}{x-y}\cdot\left[\frac{e^x-1}{x}\cdot\varrho(x,-y)-\frac{e^y-1}{y}\cdot\varrho(y,-x)\right]
\\ 
=&\frac{1}{xy(y-x)}\cdot\Big[(e^y-e^x)\cdot x\cdot\varrho(y,-x)+(e^x-1)(x-y)\cdot\varrho(x,y-x)
\\ 
& +(e^x-1)\cdot y\cdot \varrho(x,-y)-(e^y-1)\cdot x\cdot \varrho(y,-x)\Big]\\ 
=&\frac{e^x-1}{xy(x-y)}\cdot\Big[(x-y)\cdot\varrho(x,y-x)-x\cdot \varrho(y,-x)+y\cdot\varrho(x,-y)\Big].
\end{align*}
In particlar, we have $f=\varrho^\Diamond$.

Finally, the condition (C) for $f$ is equivalent to (C') for $\varrho$, as using \eqref{eq:rho(y,-x)}, we have
\begin{align*} 
&f(y-x,-x)- \exp(-x)\cdot f(x,y)=
\varrho^\Diamond(y-x,-x)-e^{-x}\varrho^\Diamond(x,y)=\\
=& \frac{1}{y}\cdot\left[\frac{e^{y-x}-1}{y-x}\cdot\varrho(y-x,x)-\frac{e^{-x}-1}{-x}\cdot\varrho(-x,x-y)\right]
\\ 
& \quad \quad -\frac{e^{-x}}{x-y}\cdot\left[\frac{e^x-1}{x}\cdot\varrho(x,-y)-\frac{e^y-1}{y}\cdot\varrho(y,-x)\right]
\\ 
=& \frac{e^{-x}}{xy(x-y)}\cdot\Big[x(e^x-e^y)\cdot\varrho(y,-x)-(x-y)(e^x-1)\cdot\varrho(-x,x-y)-
\\  
&  \quad\quad\quad \quad\quad \quad -y(e^x-1)\cdot\varrho(x,-y)+x(e^y-1)\cdot\varrho(y,-x)\Big]
\\ 
=& \frac{e^{-x}(e^x-1)}{xy(x-y)}\cdot\Big[x\cdot\varrho(y,-x)-(x-y)\cdot\varrho(-x,x-y)-y\cdot\varrho(x,-y)\Big],  
\end{align*}
completing the proof of Theorem~\ref{ABC-AprimeE-PowerSeries}.
\end{proof}

We write $D_4$ to denote the subgroup of $\mathrm{GL}(2,\Z)\subset \mathrm{GL}(2,\C)$ generated by the matrices 
$\left[\begin{array}{cc}1 & 0 \\1 & -1\end{array}\right]$ and 
$\left[\begin{array}{cc}1 & -2 \\0 & -1\end{array}\right]$. 
In particular, this group  $D_4$ is isomorphic to the dihedral group of $8$ elements.
We recall (cf. \eqref{AffineActionOnPowerSeries}) that a
$\Phi=\left[
\begin{array}{cc}
a&b\\
c&d
\end{array}\right]\in {\rm GL}(2,\Z)$ acts on a $h\in \Q[[x,y]]$ in a way such that
\begin{equation}
\label{GL2ZActionOnPowerSeries}
(\Phi\cdot h)(x,y)= h(ax+cy,bx+dy).
\end{equation}
Now the subalgebra of $\Q[[x,y]]$ of $D_4$ invariant elements is
(see Remark~\ref{D4-natural-basis})
\begin{align}
\nonumber
\Q[[x,y]]^{D_4}=&\{h\in \Q[[x,y]]:\,\Phi\cdot h=h\mbox{ for }\Phi\in D_4\}\\
\label{D4invariant-PowerSeries}
=&\{g(2x^2+2xy+y^2,4x^2y^2+4xy^3+y^4)\in \Q[[x,y]]:\\
\nonumber
&\mbox{ }g(a,b)\in \Q[[a,b]]\}
\end{align}
where the the polynomials
$2x^2+2xy+y^2$ and $4x^2y^2+4xy^3+y^4$ are algebraically independent.

\begin{proof}[Proof of Theorem~\ref{PowerSeriesValGL2Z}] 
For a $\mathcal{G}(\Z^2)$ equivariant
valuation $Z:\mathcal{P}(\Z^2)\to \Q[[x,y]]$, it is sufficient to characterize 
$Z(\{o\})=f_0$ and 
$Z([o,e_1])=f_1$  and 
$Z(T)$ according to Proposition~\ref{ExpValSL2Z}.

It follows from Proposition~\ref{PowerSeriesValGL2ZZ1}
that $Z(\{o\})=f_0$
 is a constant power series, and
$$
f_1(x,y)=Z([o,e_1])(x,y)=
g(x^2)\cdot \exp\left(\mbox{$\frac{1}2$}\,x\right)
$$
holds for some $g\in \Q[[x]]$; moreover, 
there exists a simple 
$\mathcal G(\Z^2)$ equivariant
valuation $Z_2:\mathcal{P}(\Z^2)\to \Q[[x,y]]$
such that
$$
Z(T)(x,y)=Z_2(T)(x,y)+\mbox{$\frac12$}\,f_1(x,y)+
\mbox{$\frac12$}\,f_1(y,-x)
+\mbox{$\frac12$}\,\exp(x)\cdot f_1(-x+y,-x).
$$

On the other hand, Proposition~\ref{PowerSeriesValGL2ZZ1} also says that if $c\in\Q$ and 
$f_1(x,y)=g(x^2)\cdot \exp\left(\mbox{$\frac{1}2$}\,x\right)$
for some $g\in \Q[[x]]$,
then there exists a  $\mathcal G(\Z^2)$ equivariant
valuation $Z:\mathcal{P}(\Z^2)\to \Q[[x,y]]$  such that $Z(\{o\})\equiv c$ and 
$Z([o,e_1])=f_1$.

Therefore, the remaining task is to characterize $f_2=Z_2(T)$ for a simple $\mathcal G(\Z^2)$ equivariant
valuation $Z_2:\mathcal{P}(\Z^2)\to \Q[[x,y]]$.

It follows from Proposition~\ref{simplef2algebraic-PowerSeries} and Theorem~\ref{ABC-AprimeE-PowerSeries} that
$f_2=Z_2(T)$ for a simple $\mathcal G(\Z^2)$ equivariant
valuation $Z_2:\mathcal{P}(\Z^2)\to \Q[[x,y]]$
if and only if $f=\varrho^\dag$ for a $\varrho\in\Q[[x,y]]$ satisfying conditions (A') and (E) in Theorem~\ref{ABC-AprimeE-PowerSeries}.

In turn, Theorem~\ref{PowerSeriesValGL2Z} follows from the statement that
$\varrho\in\Q[[x,y]]$ satisfies the conditions (A') and (E) in Theorem~\ref{ABC-AprimeE-PowerSeries} if and only if $\varrho\in \Q[[x,y]]^{D_4}$ and (A') holds for
$\varrho$.

Therefore, let $\varrho\in\Q[[x,y]]$ satisfy the condition (A'), and hence
\eqref{eq:rho(x,y)=rho(x+y,-y)} holds for
$\varrho$ by Lemma~\ref{lemma:rho(x,y-x)}.
Since \eqref{eq:rho(x,y)=rho(x+y,-y)} and (E) states that $\varrho$ is invariant under 
$\left[\begin{array}{cc}1 & 0 \\1 & -1\end{array}\right]$ and 
$\left[\begin{array}{cc}1 & -2 \\0 & -1\end{array}\right]$,
and (A') and \eqref{rhoformula} are readily equivalent,
we conclude Theorem~\ref{PowerSeriesValGL2Z}.    
\end{proof}

\begin{remark}
\label{D4-natural-basis}
We obtain a more traditional representation of $D_4$ after the change of variable $s:=2x+y$, $t:=y$. 
In addition, the substitution $s:=2x+y$, $t:=y$ transforms the matrix group 
$D_4$ to the group (denoted also by $D_4$) generated by  
$\left[\begin{array}{cc}1 & 0 \\ 0 & -1\end{array}\right]$ and 
$\left[\begin{array}{cc} 0 & -1 \\ -1 & 0\end{array}\right]$, 
and then (see for example Hunziker \cite[Section 4]{hunziker}) 
$$
\Q[[s,t]]^{D_4}=
\{g(s^2+t^2,s^2t^2):\,g(a,b)\in \Q[[a,b]]\}
$$
where the the polynomials
$s^2+t^2$ and $s^2t^2$ are algebraically independent.
In terms of the original variables $x,y$, we deduce \eqref{D4invariant-PowerSeries}.

To understand the space of formal power series 
$\varrho\in \Q[[x,y]]^{D_4}$ satisfying condition (A') 
occurring in Theorem~\ref{PowerSeriesValGL2Z}, for $\varrho\in\Q[[x,y]]$, we consider the $\sigma\in\Q[[s,t]]$ defined by 
$\sigma(s,t)=\rho(x,y)$. 
Then $\rho(x,y)=\sigma(2x+y,y)$, hence $\rho(x,y-x)=\sigma(2x+y-x,y-x)=
\sigma(x+y,y-x)$, and $\rho(y,x)=\sigma(x+2y,x)$. 
Therefore, equation (A')
for $\varrho\in\Q[[x,y]]$ translates to the functional equation 
\begin{equation}\label{eq:(A'')} 
(s+t)\sigma(s+t,t-s)=s\sigma(s+2t,s)+t\sigma(2s+t,t)  
\end{equation}
for $\sigma\in\Q[[s,t]]$.
In particular, for any  $d>0$, the $\Q$-vector space $\mathcal{V}_d$ of $d$-homogeneous polynomials
$\varrho\in\Q[[x,y]]^{D_4}$ satisfying (A')
is isomorphic to the
$\Q$-vector space of $d$-homogeneous polynomials
$\sigma\in\Q[[s,t]]^{D_4}$ satisfying \eqref{eq:(A'')}.
\end{remark}

\section{Dilative Valuations}
\label{sec-Dimension-Modular-Forms}

In this section we prove Theorem \ref{thm:dilative_parameters}. We start with the first case, where $\delta\geq -2$ is an even and non-zero integer, which is the most involved case. Here, Theorem \ref{theorem:dilative_decomp} says that any $\delta$-dilative valuation is simple. 

 We recall (see \eqref{d-dilative} and the following remarks) that a simple 
$\mathcal G(\Z^2)$ equivariant
valuation $Z:\mathcal{P}(\Z^2)\to \Q[[x,y]]$ is $\delta$-dilative  for $\delta\in\Z$ if
\begin{equation}
\label{d-dilative0}
Z(mT)(x,y)=m^{-\delta}\cdot Z(T)(mx,my)
\end{equation}
for any integer $m\geq 2$.
In order to describe the left hand side of \eqref{d-dilative0}, if $m\in\N$, then we consider
$g_m\in \Q[[x,y]]$ defined by
\begin{equation}
\label{gmxy}
g_m(x,y)=
\sum_{t,s\in\N,\; t+s\leq m} \exp(sx+ty)=\sum_{(s,t)\in (mT)\cap\Z^2}\exp(sx+ty).
\end{equation}
In Lemma \ref{ZmT-PowerSeries} (ii), we use our usual short hand notation for  power series involving the exponential series.

\begin{lemma}
\label{ZmT-PowerSeries}
Let $Z:\mathcal{P}(\Z^2)\to \Q[[x,y]]$ be a $\mathcal{G}(\Z^2)$ equivariant
valuation, and let $f=Z(T)$.
\begin{description}
\item[(i)] If $m\geq 2$ for $m\in\N$, then
$$ 
Z(mT)(x,y)=g_{m-1}(x,y)\cdot f(x,y)+\exp(x+y)\cdot g_{m-2}(x,y)\cdot f(-x,-y).
$$ 
\item[(ii)] If $m\in\N$, then
$$
g_{m}(x,y)=
\frac{e^{x+y}\left(e^{(m+1)x}-e^{(m+1)y}\right)-\left(e^{(m+2)x}-e^{(m+2)y}\right)+e^x-e^y}{(e^x-e^y)(e^x-1)(e^y-1)}.
$$
\end{description}
\end{lemma}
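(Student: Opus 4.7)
My plan for (i) is to compute $Z(mT)$ by subdividing $mT$ into its standard unit-triangle triangulation, which has $\binom{m+1}{2}$ upward triangles $T+(s,t)$ (indexed by $s,t\geq 0$ with $s+t\leq m-1$) and $\binom{m}{2}$ downward triangles $-T+(s+1,t+1)$ (indexed by $s,t\geq 0$ with $s+t\leq m-2$). Applying $\mathcal G(\Z^2)$ equivariance to the translation $v\mapsto v+(s,t)$ gives $Z(T+(s,t))(x,y)=e^{sx+ty}f(x,y)$, and applying it to the affine map $v\mapsto -v+(s+1,t+1)$ (which lies in $\mathcal G(\Z^2)$ since $-I\in\mathrm{GL}(2,\Z)$) gives $Z(-T+(s+1,t+1))(x,y)=e^{(s+1)x+(t+1)y}f(-x,-y)$. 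Summing these two families of contributions produces exactly $g_{m-1}(x,y)f(x,y)+e^{x+y}g_{m-2}(x,y)f(-x,-y)$.

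To justify that $Z(mT)$ is precisely this triangle sum, with no residual contribution from the interior edges or vertices of the triangulation, I would proceed by induction on $m$ using the decomposition $(m+1)T=mT\cup S_m$, where $S_m=\{(u,v)\in(m+1)T:u+v\geq m\}$ is the trapezoidal strip along the new hypotenuse and $mT\cap S_m=[me_1,me_2]$. The valuation identity reads
\[
Z((m+1)T)=Z(mT)+Z(S_m)-Z([me_1,me_2]).
\]
In the setting of Section~\ref{sec-Dimension-Modular-Forms}, the relevant $\delta$-dilative valuations are simple by Theorem~\ref{theorem:dilative_decomp}, so the edge term vanishes; $Z(S_m)$ reduces in the same way to the sum over the $m+1$ upward and $m$ downward unit triangles it contains. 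The algebraic increments $g_m-g_{m-1}=\sum_{s+t=m}e^{sx+ty}$ and the analogous identity for $g_{m-1}-g_{m-2}$ match those strip contributions precisely (after the $e^{x+y}$ factor for the downward ones), closing the induction.

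Part (ii) is a direct geometric-series calculation. Writing $g_m(x,y)=\sum_{t=0}^m e^{ty}\sum_{s=0}^{m-t}e^{sx}$ and evaluating the inner sum as $(e^{(m-t+1)x}-1)/(e^x-1)$ leaves two outer geometric series in $t$, with ratios $e^{y-x}$ and $e^y$ respectively. Summing these, combining over the common denominator $(e^x-1)(e^y-1)(e^x-e^y)$, and simplifying the numerator yields the stated closed form. The main obstacle lies in (i): checking that interior-edge and interior-vertex contributions drop out. This is immediate in the simple case relevant here, but a fully general argument would require tracking those lower-dimensional pieces and showing they cancel.
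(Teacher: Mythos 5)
Your argument for (i) is the same as the paper's — the paper disposes of it in one sentence via the tiling of $mT$ by the translates $T+(s,t)$ and $-T+(s+1,t+1)$ — and your part (ii) is a correct direct summation of the double geometric series, where the paper instead solves the two-sided recursion $e^x g_m+\sum_{\ell=0}^{m+1}e^{\ell y}=g_{m+1}=e^y g_m+\sum_{\ell=0}^{m+1}e^{\ell x}$ for $g_m$; both routes are routine. The substantive point is the ``obstacle'' you flag in (i), and your instinct is correct: the lower-dimensional faces of the triangulation do \emph{not} cancel for a general equivariant valuation, so the identity in (i) is genuinely false without the hypothesis that $Z$ is simple. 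For example, the lattice-point enumerator $Z(P)(x,y)=\sum_{(s,t)\in P\cap\Z^2}e^{sx+ty}$ is $\mathcal G(\Z^2)$ equivariant with $f=Z(T)=1+e^x+e^y$, yet for $m=2$ the right-hand side of (i) equals $1+e^{2x}+e^{2y}+3e^x+3e^y+3e^{x+y}$, whereas $Z(2T)=g_2=1+e^x+e^y+e^{2x}+e^{x+y}+e^{2y}$; the discrepancy $2(e^x+e^y+e^{x+y})$ is precisely the contribution of the three interior edges of the triangulation. So the lemma implicitly carries the hypothesis that $Z$ is simple, which is the only setting in which the paper applies it (Proposition~\ref{degreedpol-to-dilatived-2}). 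With that hypothesis your inductive strip decomposition — using that $Z$ vanishes on $[me_1,me_2]$ and on the interior edges of each strip, so that the partial unions stay convex and the valuation identity telescopes — is a complete proof. Do not attempt to ``close'' the non-simple case; the correct fix is to add the word simple to the statement.
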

\begin{proof}
The formula in (i) follows from the tiling of $mT$ by translates of $T$ and $-T$.

For (ii), we recall that $e_1=(1,0)$ and $e_2=(0,1)$, and  observe that the points of $\Z^2$ in $[(m+1)T]\backslash(e_i+mT)$ are of the form $\ell e_j$ where $\{i,j\}=\{1,2\}$ and $\ell=0,\ldots,m+1$. It follows that
$$
\exp(x)\cdot g_{m}(x,y)+\sum_{\ell=0}^{m+1}\exp(\ell y)=g_{m+1}(x,y)=
\exp(y)\cdot g_{m}(x,y)+\sum_{\ell=0}^{m+1}\exp(\ell x),
$$
which in turn yields (ii).
\end{proof}


\begin{prop}
\label{degreedpol-to-dilatived-2}
Let $\varrho\in\Q[[x,y]]^{D_4}$  satisfy the condition (A') in Theorem~\ref{ABC-AprimeE-PowerSeries}, and let 
$Z(T)=f=\varrho^\dag$ for the simple
$\mathcal G(\Z^2)$ equivariant
valuation 
$Z:\mathcal{P}(\Z^2)\to \Q[[x,y]]$ provided by
Proposition~\ref{simplef2algebraic-PowerSeries} and
Theorem~\ref{ABC-AprimeE-PowerSeries}.
If $\varrho$ is a homogeneous polynomial of degree $d$ for $d\in 2\N$, then $Z$ is $d-2$ dilative.
\end{prop}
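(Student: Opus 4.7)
The strategy is direct verification by expanding both sides of the equation $Z(mT)(x,y)=m^{-(d-2)}Z(T)(mx,my)$ as linear combinations of $\varrho(y-x,x)$ and $\varrho(x,y-x)$ and matching coefficients. By Lemma~\ref{ZmT-PowerSeries}~(i),
\[
Z(mT)(x,y) \;=\; g_{m-1}(x,y)\,f(x,y) \;+\; e^{x+y}\,g_{m-2}(x,y)\,f(-x,-y),
\]
so three ingredients are needed: $f(x,y)$, $f(-x,-y)$, and $f(mx,my)$, all cast in the form $\tfrac{1}{y}[\text{coeff}]\,\varrho(y-x,x)+\tfrac{1}{y}[\text{coeff}]\,\varrho(x,y-x)$.

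First I would note that $-I\in D_4$: in the coordinates $s=2x+y$, $t=y$ of Remark~\ref{D4-natural-basis}, the product of the two standard generators is a rotation by $90^\circ$ whose square is $-I$. Hence $\varrho\in\Q[[x,y]]^{D_4}$ satisfies condition (D) of Lemma~\ref{even-degree-terms}. Using (D) together with the elementary $e^{-y}-e^{-x}=(e^x-e^y)e^{-x-y}$ and $e^{-x}-1=-(e^x-1)e^{-x}$, a short manipulation of $f=\varrho^\dag$ yields
\[
e^{x+y}f(-x,-y) \;=\; \frac{1}{y}\left[\frac{(e^x-1)e^y}{x}\varrho(x,y-x)-\frac{e^y-e^x}{y-x}\varrho(y-x,x)\right],
\]
while the homogeneity $\varrho(mu,mv)=m^d\varrho(u,v)$ directly gives
\[
m^{-(d-2)}f(mx,my)\;=\;\frac{1}{y}\left[\frac{e^{my}-e^{mx}}{y-x}\varrho(y-x,x)-\frac{e^{mx}-1}{x}\varrho(x,y-x)\right].
\]

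Matching the coefficients of $\varrho(y-x,x)$ and of $\varrho(x,y-x)$ in
$g_{m-1}\,f+e^{x+y}g_{m-2}\,f(-\cdot,-\cdot)$ against the right-hand side above, the proposition reduces to two $\varrho$-free identities,
\[
(e^y-e^x)(g_{m-1}-g_{m-2})=e^{my}-e^{mx},\qquad (e^x-1)(g_{m-1}-e^y g_{m-2})=e^{mx}-1,
\]
valid for every $m\geq 2$. Both are immediate from the combinatorial description \eqref{gmxy}. For the second, $g_{m-1}-e^y g_{m-2}$ collects exactly the $t=0$ terms of $g_{m-1}$ and equals $\sum_{s=0}^{m-1}e^{sx}=(e^{mx}-1)/(e^x-1)$. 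For the first, $g_{m-1}-g_{m-2}$ is the sum over the boundary layer $s+t=m-1$, equal to $e^{(m-1)y}\cdot(e^{m(x-y)}-1)/(e^{x-y}-1)$, and multiplication by $e^y-e^x=-e^y(e^{x-y}-1)$ telescopes to $e^{my}-e^{mx}$. The only real work is the symbolic bookkeeping in the reduction step; no algebraic input beyond $D_4$-invariance, the formula for $\varrho^\dag$, and homogeneity is required, and the two $g_m$-identities could alternatively be read off from the closed form in Lemma~\ref{ZmT-PowerSeries}~(ii).
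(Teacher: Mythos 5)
Your proof is correct and follows essentially the same route as the paper: expand $Z(mT)$ via Lemma~\ref{ZmT-PowerSeries}~(i), rewrite $f(-x,-y)$ using condition (D) (which indeed holds since $-I\in D_4$), and compare coefficients of $\varrho(y-x,x)$ and $\varrho(x,y-x)$ with $m^{2-d}f(mx,my)$ obtained from homogeneity. The only (cosmetic) difference is that the paper substitutes the closed form of $g_m$ from Lemma~\ref{ZmT-PowerSeries}~(ii) and simplifies the resulting rational expressions, whereas you verify the two equivalent $g_m$-identities directly from the lattice-point sum \eqref{gmxy}; both computations check out.
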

\begin{proof} According to
 \eqref{simpleZrho-PowerSeries0} and Theorem~\ref{ABC-AprimeE-PowerSeries}, $f=Z(T)$ satisfies that
\begin{equation}
\label{simpleZrho-PowerSeries00}
f(x,y)= \frac{e^{x}-e^{y}}{(x-y)y}\cdot \varrho(y-x,x)-
 \frac{e^{x}-1}{xy}\cdot \varrho(x,y-x)
\end{equation}
Since $\varrho(-x,-y)=\varrho(x,y)$, we also have 
\begin{align}
\nonumber
f(-x,-y)&= \frac{e^{-x}-e^{-y}}{(x-y)y}\cdot \varrho(y-x,x)-
 \frac{e^{-x}-1}{xy}\cdot \varrho(x,y-x)\\
 \label{simpleZrho-PowerSeries00-minus}
&=-\frac{e^{x}-e^{y}}{e^{x+y}(x-y)y}\cdot \varrho(y-x,x)+
\frac{e^{x}-1}{e^x xy}\cdot \varrho(x,y-x).
\end{align}
In order to show that $Z$ is $(d-2)$-dilative, we calculate the left hand side of \eqref{d-dilative0} using
Lemma \ref{ZmT-PowerSeries}, 
\eqref{simpleZrho-PowerSeries00} and
\eqref{simpleZrho-PowerSeries00-minus}, and obtain that
$$
 Z(mT)(x,y)=G(x,y,m)\cdot \varrho(y-x,x)-
H(x,y,m)\cdot \varrho(x,y-x) 
$$
where
\begin{align*}
 G(&x,y,m)=
\frac{e^{x+y}\left(e^{mx}-e^{my}\right)-\left(e^{(m+1)x}-e^{(m+1)y}\right)+e^x-e^y}{(e^x-e^y)(e^x-1)(e^y-1)}
\cdot \frac{e^{x}-e^{y}}{(x-y)y}\\
&-
e^{x+y}\cdot\frac{e^{x+y}\left(e^{(m-1)x}-e^{(m-1)y}\right)-\left(e^{mx}-e^{my}\right)+e^x-e^y}{(e^x-e^y)(e^x-1)(e^y-1)}\cdot \frac{e^{x}-e^{y}}{e^{x+y}(x-y)y}\\
  &\qquad\quad=\frac{e^{mx}-e^{my}}{(x-y)y}
\end{align*}
and
\begin{align*}
 H(x,&y,m)=
\frac{e^{x+y}\left(e^{mx}-e^{my}\right)-\left(e^{(m+1)x}-e^{(m+1)y}\right)+e^x-e^y}{(e^x-e^y)(e^x-1)(e^y-1)}
\cdot \frac{e^{x}-1}{xy}\\
&-
e^{x+y}\cdot\frac{e^{x+y}\left(e^{(m-1)x}-e^{(m-1)y}\right)-\left(e^{mx}-e^{my}\right)+e^x-e^y}{(e^x-e^y)(e^x-1)(e^y-1)}\cdot \frac{e^{x}-1}{e^{x}xy}\\
&\qquad =\frac{e^{mx}-1}{xy}.
\end{align*}
Since $\varrho$ is homogeneous of degree $d$, we deduce that
$$
Z(mT)(x,y)=\frac{e^{mx}-e^{my}}{(x-y)y}\cdot \varrho(y-x,x)-
\frac{e^{mx}-1}{xy}\cdot \varrho(x,y-x)=
m^{2-d}f(mx,my),
$$
proving that $Z$ is $(d-2)$-dilative.
\end{proof}
In order to verify the converse of Proposition~\ref{degreedpol-to-dilatived-2}, we summarize basic properties of 
 $\delta$-dilative valuations
 based on Freyer, Ludwig, Rubey \cite{FMR}, and introduce various related notions. For $r\in\N$, we write $\Q[x,y]_r$ to denote the $(r+1)$-dimensional $\Q$ vector space of homogeneous polynomials of degree $r$, and for
$f\in \Q[[x,y]]$, we write $\pi_rf\in \Q[x,y]_r$ to denote the $r$th degree term in $f$; namely, $f$ is the formal sum $\sum_{r\in\N}\pi_rf$.

For $r\in\N$, we say that a valuation $\Psi:\mathcal{P}(\Z^2)\to \Q[x,y]_r$ is {\it translatively polynomial} if there exist functions $\Psi_{(i)}:\mathcal{P}(\Z^2)\to\Q[x,y]_i$ for $i=0,\ldots,r$ such that
if $v=(\alpha,\beta)\in\Z^2$ and $P\in\mathcal{P}(\Z^2)$, then
\begin{equation}
\label{trans-poly}
\Psi(P+v)(x,y)=\sum_{j=0}^r\Psi_{(r-j)}(P)(x,y)\cdot (\alpha x+\beta y)^j;
\end{equation}
or in the more compact tensor notation, $\Psi(P+v)=\sum_{j=0}^r\Psi_{(r-j)}(P)\otimes v^j$. In this case,
each $\Psi_{(i)}$ in \eqref{trans-poly} is a translatively polynomial valuation.  The crucial fact proved originally by McMullen \cite{McM77} if $r\leq 1$ and by Khovanskii, Pukhlikov \cite{KoP92} if $r\in\N$ is that 
there exists a $j$-homogeneous
translatively polynomial valuation $\Psi_j:\mathcal{P}(\Z^2)\to \Q[x,y]_r$
for $j=0,\ldots,r+2$ such that 
\begin{equation}
\label{trans-poly-decomp}
\Psi=\sum_{j=0}^{r+2}\Psi_j.
\end{equation}
It was observed by Freyer, Ludwig, Rubey in \cite[Section 6]{FMR} that a valuation $Z\colon\mathcal{P}(\Z^2) \to \Q[[x,y]]$ is $\mathcal{G}(\Z^2)$ equivariant, if and only if each summand $\pi_rZ \colon \mathcal{P}(\Z^2) \to \Q[x,y]_r$ is translatively polynomial and $\mathrm{GL}(2,\Z)$ equivariant, i.e.,
\[
\pi_rZ\left(\begin{pmatrix}
    a & b\\
    c & d
\end{pmatrix} P\right)(x,y) =\pi_rZ(P)(ax+cy,bx+dy),
\]
holds for all $a,b,c,d\in\Z$ with $|ad-bc|=1$.


For $r\in\N$, Freyer, Ludwig, Rubey \cite{FMR} consider the $\Q$ vector space ${\rm Val}^r$ of 
translatively polynomial  and ${\rm GL}(2,\Z)$ equivariant
valuations 
$Z:\mathcal{P}(\Z^2)\to \Q[x,y]_r$, and hence
\begin{equation}
\label{Valr-directsum}
{\rm Val}^r=\oplus_{j=0}^{r+2}\,
{\rm Val}^r_j
\end{equation}
where ${\rm Val}^r_j$ is the $\Q$ vector space ${\rm Val}^r$ of $j$-homogeneous
translatively polynomial  and ${\rm GL}(2,\Z)$ equivariant
valuations $Z:\mathcal{P}(\Z^2)\to \Q[x,y]_r$ for $j=0,\ldots,r$ (cf. \eqref{trans-poly-decomp}). 
In \cite[Lemma 25]{FMR} it is shown that a $\mathcal{G}(\Z^2)$ equivariant
valuation $Z:\mathcal{P}(\Z^2)\to \Q[[x,y]]$ is $\delta$-dilative  for an integer $\delta\geq -2$ if and only if
for each $r\in\N$, we have \begin{equation}
    \label{d-dilative-Val}
    \pi_r Z\in {\rm Val}^r_{r-\delta}.
\end{equation}

\begin{proof}[Proof of Theorem~\ref{thm:dilative_parameters}, Case 1] 
Recall that $\mathcal{V}\subset\Q[[x,y]]$ denotes the vector space of power series in $x$ and $y$ that satisfy \eqref{rhoformula} and that $\mathcal{V}_d$ is its degree $d$ part, i.e., $\mathcal
V_d = \pi_d \mathcal{V}$.


According to Theorem~\ref{PowerSeriesValGL2Z}, there exists a vector space isomorphism $\Phi$ from $\mathcal{V}$ to the $\Q$ vector space of all simple
$\mathcal G(\Z^2)$ equivariant
valuations 
$Z:\mathcal{P}(\Z^2)\to \Q[[x,y]]$ where
\begin{equation}
\label{simpleZrho-PowerSeries000}
Z(T)(x,y)= \frac{e^{x}-e^{y}}{(x-y)y}\cdot \varrho(y-x,x)-
 \frac{e^{x}-1}{xy}\cdot \varrho(x,y-x)
\end{equation}
 for $\varrho\in \cal{V}$ and $Z=\Phi(\varrho)$. We deduce from Proposition~\ref{degreedpol-to-dilatived-2} that if
$\varrho\in \mathcal{V}_d$ for some $d\in 2\N$, then
$\Phi(\varrho)$ is $(d-2)$-dilative, and hence \eqref{Valr-directsum} and \eqref{d-dilative-Val} yield that  then
\begin{equation}
\label{d-dilative-Val0}
\begin{array}{rcll}
\pi_r \Phi(\varrho)&=&0&\mbox{ if }r<d-2\\
\pi_r \Phi(\varrho)&\in& {\rm Val}^r_{r-d+2}&\mbox{ if }r\geq d-2.
\end{array}
\end{equation}

Now let $Z:\mathcal{P}(\Z^2)\to \Q[[x,y]]$ be a non-trivial $\delta$-dilative 
$\mathcal{G}(\Z^2)$ equivariant
valuation, where $\delta \geq -2$ is a non-zero integer. Then $Z$ is simple (cf.\ Theorem \ref{theorem:dilative_decomp}). In particular, 
$Z=\Phi(\varrho)$ for a $\varrho\in \cal{V}$, and let
$\varrho_d=\pi_d\varrho$ for $d\in\N$.
We suppose that there exists a $\bar{d}\in 2\N$ with $\bar{d}\neq \delta+2$ such that
$\varrho_{\bar{d}}\neq 0$, and seek a contradiction.
As $\Phi$ is an isomorphism and $\Phi(\rho_{\overline d})$ is $(\overline d -2)$-dilative, \eqref{d-dilative-Val} yields the existence of a $\bar{r}\geq \bar{d}-2$ such that
\begin{equation}
\label{indirect-dilative}
\pi_{\bar{r}} \Phi(\varrho_{\bar{d}})\in {\rm Val}^{\bar{r}}_{\bar{r}-\bar{d}+2}\backslash\{0\}.
\end{equation}
According to \eqref{Valr-directsum}, there exists a natural linear projection map
$\Pi:{\rm Val}^{\bar{r}}\to {\rm Val}^{\bar{r}}_{\bar{r}-\bar{d}+2}$, which then satisfies by \eqref{d-dilative-Val0}, \eqref{indirect-dilative} and $\bar{d}\neq \delta+2$ that
\begin{equation}
\label{good-bad-Pi}
\Pi\circ \pi_{\bar{r}}\circ \Phi(\varrho_{\bar{d}})\neq 0\mbox{ \ and \ }\Pi\circ \pi_{\bar{r}}\circ \Phi(\varrho)= 0.
\end{equation}
Let $\theta=\varrho-\sum_{d=0}^{\bar{r}+2}\varrho_d$, and hence $\theta$ may contain only non-trivial terms of degree at least  $\bar{r}+3$. It follows from \eqref{simpleZrho-PowerSeries000} that $\Phi(\theta)$ may contain only non-trivial terms of degree at least  $\bar{r}+1$; therefore, $\pi_{\bar{r}} \Phi(\theta)= 0$. 
We deduce from 
$\bar{r}+2\geq \bar{d}$,
\eqref{d-dilative-Val0} and \eqref{good-bad-Pi} that
$$
0=\Pi\circ \pi_{\bar{r}}\circ \Phi(\varrho)=\Pi\circ \pi_{\bar{r}}\circ \Phi\left(\theta+\sum_{d=0}^{\bar{r}+2}\varrho_d\right)=\Pi\circ \pi_{\bar{r}}\circ \Phi(\varrho_{\bar{d}})\neq 0,
$$
which is a contradiction and proves that $\varrho$ is a homogeneous polynomial of degree $\delta+2$.

In summary, we have proved so far that
for any $d\in 2\N$, the restriction of $\Phi$ to $\mathcal{V}_d$ is an isomorphism to the $\Q$ vector space $\overline{\rm Val}_{(d-2)}$ of $(d-2)$-dilative simple
$\mathcal G(\Z^2)$ equivariant
valuations 
$Z:\mathcal{P}(\Z^2)\to \Q[[x,y]]$. 
This completes the proof of the first case in 
Theorem~\ref{thm:dilative_parameters}.
\end{proof}

The proofs of the other two cases in Theorem \ref{thm:dilative_parameters} are shorter, since we can make use of the fact that the respective spaces of $\delta$-dilative valuations $\overline{\mathrm{Val}}_\delta$ are 1-dimensional.

\begin{proof}[Proof of Theorem \ref{thm:dilative_parameters}, Case 2]
Let $\delta > -2$ be an odd integer. We start by showing that the valuation $Z$ obtained from Theorem \ref{PowerSeriesValGL2Z} by choosing $c=0$, $g(x)=x^{\delta/2}\sinh(\tfrac{\sqrt x}{2})$, $f_2=0$ is $\delta$-dilative. Note that $Z$ vanishes on points, since $c=0$. We show that $Z$ is $\delta$-dilative for the standard segment, i.e.,
\begin{equation}
\label{eq:dilative_segments}
    Z([0,me_1])(x,y) = m^{-\delta}Z([0,e_1])(mx,my)
\end{equation}
holds for all $m\in\N$. We write
$[0,me_1] = \bigcup_{0\leq k < m} ke_i + [0,e_1] $,
where the union is disjoint up to points. Thus, by Theorem \ref{PowerSeriesValGL2Z}, we have
\[\begin{split}
Z([0,me_1]) &= \left(\sum_{k=0}^{m-1} \exp(kx)\right)Z([0,e_1]) = \frac{1-\exp(mx)}{1-\exp(x)} x^\delta\sinh(\tfrac x2)\exp(\tfrac x2)\\
&=x^\delta \,\frac{\exp(mx)-1}{2}.
\end{split}
\]
Regarding the right hand side of \eqref{eq:dilative_segments}, we have
\[
m^{-\delta}Z([0,e_1])(mx,my) = m^{-\delta}(mx)^\delta\sinh(\tfrac{mx}{2}) \exp(\tfrac{mx}{2}) =x^\delta \,\frac{\exp(mx)-1}{2},
\]
so \eqref{eq:dilative_segments} is verified. It follows from the affine equivariance that $Z$ is $\delta$-dilative for all segments.

In order to see that $Z$ is $\delta$-dilative for 2-dimensional polygons, we observe that
\(
Z(P) = \tfrac 12\sum_{e\subset P} Z(e)
\)
holds for any proper polygon, where the sum ranges over the edges of $P$: For a unimodular triangle, this follows from \eqref{ZZ2Z1} together with the $\mathcal{G}(\Z^2)$ equivariance. For an arbitrary polygon, we see that the interior edges of a unimodular triangulation $\mathcal T$ of $P$ do not contribute to the sum
\begin{equation}
\label{eq:Ztriang}
    Z(P) = \sum_{S} \sum_{e\subset S} \tfrac 12 Z(e) - \sum_{e'} Z(e'),
\end{equation}
where $S$ ranges over the unimodular triangles in $\mathcal{T}$, $e$ over the edges of $S$ for a given $S$, and $e'$ over the edges in $\mathcal{T}$. Hence, the only contributions to \eqref{eq:Ztriang} come from the boundary edges of $\mathcal T$ which sum up to the edges of $P$. This, together with the $\delta$-dilative property for segments, proves that $Z$ is indeed a $\delta$-dilative valuation.

The reverse implication now follows from the fact that $\Val_\delta$ is a 1-dimensional space of non-simple valuations for odd $\delta>-2$. 
\end{proof}

\begin{proof}[Proof of Theorem~\ref{thm:dilative_parameters}, Case 3]
As in the previous proof, it suffices to show that the valuation $Z$ from Theorem \ref{PowerSeriesValGL2Z} with $c=1$, $g(x) = \cosh\left(\tfrac {\sqrt{x}}{2}\right)$ and $f_2 = 0$ is 0-dilative. For a point $p=(p_1,p_2)\in\Z^2$ we have
\[
Z(p) = Z(\{0\} + p) = \exp({p_1x + p_2y})Z(\{0\}) = \exp({p_1x + p_2y})
\]
and for $m\in\N$ it follows that
\[\begin{split}
Z(mp)(x,y) &= Z(p + (m-1)p)(x,y)\\
&= \exp((m-1)(p_1x + p_2y))\exp({p_1x+p_2y}) \\
&= \exp({p_1mx +p_2my}) = Z(p)(mx,my).
\end{split}\]
Hence, $Z$ is 0-dilative for points. Next, we show that $Z$ is 0-dilative for segments, i.e., we aim to verify \eqref{eq:dilative_segments} for $\delta=0$. This time, we have to take into account that $Z$ does not vanish on points, so we compute
\[
\begin{split}
    Z([0,me_1])(x,y) &= \left(\sum_{k=0}^{m-1}\exp(kx)\right) Z([0,e_1]) - \sum_{\ell=1}^{m-1}\exp(\ell x) Z(\{0\})\\
    &= \frac{1-\exp(mx)}{1-\exp(x)} \cosh(\tfrac x2)\exp(\tfrac x2) - \frac{\exp(x)-\exp(mx)}{1-\exp(x)}\\
    & = \tfrac12 \frac{(1-\exp(mx))(\exp(x) + 1) - 2(\exp(x) - \exp(mx))}{1-\exp(x)}\\
    & = \frac 12 \frac{1-\exp(x)+\exp(mx) -\exp(mx + 1)}{1-\exp(x)}\\
    & = \tfrac 12 \frac{(1-\exp(x))(1+\exp(mx)}{1-\exp(x)}\\
    & = \frac {\exp(\tfrac{mx}{2}) + \exp(-\tfrac{mx}{2})}{2}\exp(\tfrac{mx}{2})\\
    &= \cosh(\tfrac {(\sqrt{mx})^2}{2})\exp(\tfrac{mx}{2}) = Z([0,e_1])(mx,my).
\end{split}
\]

This shows that $Z$ is also 0-dilative on segments. In order to see that $Z$ is 0-dilative on full-dimensional polygons, we can proceed as in the previous proof, exploiting the fact that interior lattice points of a polygon are contained in as many triangles as edges for any unimodular triangulation. Hence, the formular for $Z(P)$ is the same as in \eqref{eq:Ztriang} and we see that 
$Z$ is indeed 0-dilative.

As in Case 2, the reverse implication follows from $\dim\Val_0 = 1$ (cf.\ Theorem~\ref{theorem:dilative_decomp}).
\end{proof}


As a by-product of the proofs we obtain that if $\delta$ is odd or zero, any $\delta$-dilative valuation acts as an equivariant surface area measure.

\begin{coro}
\label{coro:surface}
    Let $Z\colon\mathcal{P}(\Z^2) \to \Q[x,y]$ be a $\delta$-dilative $\mathcal{G}(\Z^2)$ equivariant valuation, where $\delta>-2$ is odd or zero. Then, $Z(P) = \tfrac 12\sum_{e\subset P} Z(e)$, where $e$ ranges over the edges of $P$.
\end{coro}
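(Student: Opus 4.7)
The plan is to reduce to spanning valuations and leverage work already done in the proofs of Cases~2 and~3 of Theorem~\ref{thm:dilative_parameters}.

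\emph{Reduction to one-dimensional spaces.} By Theorem~\ref{theorem:dilative_decomp}, the space $\Val_\delta$ is one-dimensional whenever $\delta>-2$ is odd, as well as for $\delta=0$. Both sides of the claimed identity depend linearly on $Z$, so it suffices to verify the identity for a single nonzero element of $\Val_\delta$ in each admissible case; the distinguished valuations constructed in Cases~2 and~3 of Theorem~\ref{thm:dilative_parameters} are natural candidates.

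\emph{Verification on a spanning valuation.} By Theorem~\ref{thm:dilative_parameters}, these distinguished valuations satisfy $\rho=0$, so $f_2=0$ in the notation of Theorem~\ref{PowerSeriesValGL2Z}, and \eqref{ZZ2Z1} collapses to
$$
Z(T)=\tfrac12 f_1(x,y)+\tfrac12 f_1(y,-x)+\tfrac12\exp(x)\,f_1(-x+y,-x).
$$
Applying the $\mathcal{G}(\Z^2)$-equivariance \eqref{AffineActionOnPowerSeries} identifies the three summands with $\tfrac12 Z([o,e_1])$, $\tfrac12 Z([o,e_2])$ and $\tfrac12 Z([e_1,e_2])$, respectively, so $Z(T)=\tfrac12\sum_{e\subset T}Z(e)$; by equivariance the identity persists for every unimodular triangle. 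For a general 2-dimensional $P\in\mathcal{P}(\Z^2)$, pick a unimodular triangulation $\mathcal T$ and iterate the valuation axiom as in the bookkeeping surrounding \eqref{eq:Ztriang}: after substituting the triangle identity into $\sum_{S\in\mathcal T}Z(S)$, each interior edge of $\mathcal T$ appears twice with weight $\tfrac12$ and is cancelled by the interior-edge correction, while each boundary edge of $P$ contributes exactly once with weight $\tfrac12$.

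\emph{Main obstacle.} The only subtle point is to control the interior-vertex corrections to the inclusion--exclusion. In Case~2 this is immediate because $c=0$ forces $Z(\{v\})=0$ for every lattice point $v$. In Case~3, where $Z$ does not vanish on points, one instead invokes the combinatorial identity (used implicitly in the proof of Case~3 of Theorem~\ref{thm:dilative_parameters}) that every interior vertex of a unimodular triangulation of a convex polygon is incident to exactly as many triangles as edges, which forces the coefficient of each $Z(\{v'\})$ to vanish. Once this local count is in place, linearity together with $\dim\Val_\delta=1$ extends the identity from the spanning valuation to all of $\Val_\delta$, completing the proof.
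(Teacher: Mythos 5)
Your proposal is correct and follows essentially the same route as the paper: the corollary is stated there precisely as a by-product of the proofs of Cases 2 and 3 of Theorem~\ref{thm:dilative_parameters}, which establish the edge formula for the spanning valuations (triangle identity from \eqref{ZZ2Z1} with $f_2=0$, then the triangulation bookkeeping of \eqref{eq:Ztriang} with the vertex counts cancelling), and the extension to all of $\Val_\delta$ is exactly your linearity plus $\dim\Val_\delta=1$ argument.
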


The conceptual difference of Case 1 to Cases 2 and 3 is that in the latter cases we knew a priori that the space of $\delta$-dilative valuations $\Val_\delta$ is of the same dimension than the valuations described by the parameters in the respective case, whereas in the first case it was \emph{not} a priori clear that the dimensions of $\Val_\delta$ and $\mathcal{V}_{\delta+2}$ agree. A posteriori we see that they indeed agree and together with Theorem \ref{theorem:dilative_decomp} we obtain:

\begin{coro}
\label{coro:vd_dims}
Let $d\in\N$ and let $\mathcal V_d$ be the vector space of $d$-homogeneous polynomials that satisfy \eqref{rhoformula}. Then,
\[
\dim \mathcal V_d = \begin{cases}
0, & d\text{ odd},\\
                \lfloor \tfrac{d}{12}\rfloor + 1, &d \text{ even and }\,\delta\,{\rm mod}\,12 \neq 2,\\
                \lfloor \tfrac{d}{12}\rfloor,&d \text{ even and }\,\delta\,{\rm mod}\,12 = 2.
\end{cases}
\]
\end{coro}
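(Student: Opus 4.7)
The plan is to deduce the dimension formula by combining the identification of $\mathcal V_d$ with a space of dilative valuations (established in the proof of Theorem \ref{thm:dilative_parameters}) with the explicit dimensions in Theorem \ref{theorem:dilative_decomp}. The odd case is immediate: by \eqref{D4-natural-basis-intro}, every element of $\mathbb Q[[x,y]]^{D_4}$ has only terms of even total degree, so $\mathcal V_d = 0$ whenever $d$ is odd.

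For even $d\geq 0$ with $d\neq 2$, I would appeal to the argument already carried out in Case 1 of Theorem \ref{thm:dilative_parameters}, which shows that the isomorphism $\Phi$ from Theorem \ref{PowerSeriesValGL2Z} restricts to a linear isomorphism between $\mathcal V_d$ and the space of simple $(d-2)$-dilative $\mathcal G(\Z^2)$ equivariant valuations. Because $\delta := d - 2$ is even and non-zero, Theorem \ref{theorem:dilative_decomp} guarantees that every $\delta$-dilative valuation is automatically simple, and hence this space coincides with $\Val_{d-2}$. Substituting $\delta = d - 2$ into the piecewise dimension formula of that theorem yields $\lfloor d/12\rfloor + 1$ when $d \bmod 12 \neq 2$ and $\lfloor d/12\rfloor$ when $d \bmod 12 = 2$, exactly matching the corollary.

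The remaining case $d = 2$ must be handled separately: by Proposition \ref{degreedpol-to-dilatived-2}, for any $\rho \in \mathcal V_2$ the valuation $\Phi(\rho)$ is simple and $0$-dilative. However, Theorem \ref{theorem:dilative_decomp} excludes the factor $\Val_0$ from the simple part of $\Val$, so the only simple $0$-dilative valuation is the zero valuation. Injectivity of $\Phi$ therefore forces $\mathcal V_2 = 0$, in agreement with $\lfloor 2/12 \rfloor = 0$. I do not anticipate any genuine obstacle beyond the routine bookkeeping translation of the modular congruence condition under the shift $\delta \mapsto d = \delta + 2$; this is essentially an assembly of results already in hand.
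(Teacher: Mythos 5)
Your proof is correct and follows essentially the same route as the paper: Case~1 of Theorem~\ref{thm:dilative_parameters} identifies $\mathcal V_d$ with $\Val_{d-2}$ for even non-zero $\delta=d-2$, and the dimension formula of Theorem~\ref{theorem:dilative_decomp} then gives the result. Your explicit treatment of the edge cases ($d$ odd via the $D_4$-invariance forcing even degrees, and $d=2$ via the absence of non-trivial simple $0$-dilative valuations together with the injectivity of $\Phi$) is sound and fills in steps the paper leaves implicit.
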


\bigskip

\noindent{\bf Acknowledgements.} We are grateful to Monika Ludwig  (TU Wien), Gergely Harcos (R\'enyi Institute) and \'Arp\'ad T\'oth (R\'enyi Institute) for enlightening discussions.
We are also thankful for the NKKP ADVANCED grant 150613 supporting K.J.\ B\"or\"oczky, for the NKFIH grant 138828 supporting M.\ Domokos, for the Deutsche Forschungsgemeinschaft (DFG) project number 539867386 supporting A.\ Freyer, and  for the National Natural Science Foundation of China 12201388 and the Austrian Science Fund (FWF) I3027 supporting J.\ Li.

K\'aroly J. B\"or\"oczky, HUN-REN Alfr\'ed R\'enyi Institute of Mathematics, boroczky.karoly.j@renyi.hu\\

M\'aty\'as Domokos, HUN-REN Alfr\'ed R\'enyi Institute of Mathematics, domokos.matyas@renyi.hu\\

Ansgar Freyer, FU Berlin, Fachbereich Mathematik und Informatik, Arnimallee 2, 14195 Berlin, a.freyer@fu-berlin.de\\

Christoph Haberl, Vienna University of Technology, Institute of Discrete Mathematics and Geometry, Wiedner Hauptstraße 8-10/104, 1040 Vienna, Austria, christoph.haberl@tuwien.ac.at\\

Jin Li, Department of Mathematics, and Newtouch Center for Mathematics, Shanghai University, li.jin.math@outlook.com 

\end{document}